\DeclareSymbolFont{cyrletters}{OT2}{wncyr}{m}{n}
\DeclareMathSymbol{\Sha}{\mathalpha}{cyrletters}{"58}
\newtheorem{theorem}{Theorem}[section]
\newtheorem{lemma}[theorem]{Lemma}
\newtheorem{proposition}[theorem]{Proposition}
\newtheorem*{proposition*}{Proposition}
\newtheorem*{questiona*}{Question A}
\newtheorem*{questionb*}{Question B}
\newtheorem*{theorem*}{Theorem}
\newtheorem*{question*}{Question}
\theoremstyle{definition}
\newtheorem{remark}[theorem]{Remark}
\newtheorem*{acknowledgement}{Acknowledgement}
\theoremstyle{remark}
\title{Small Tamagawa numbers of elliptic curves with isogenies or torsion}
\author{Mentzelos Melistas}
\address{University of Twente, Department of Applied Mathematics, Drienerlolaan 5, 7522 NB Enschede, The Netherlands}
\begin{document}

\maketitle
\begin{abstract}
    In this article we study Tamagawa numbers of elliptic curves defined over $\mathbb{Q}$ that have  isogenies or torsion points. More precisely, our aim is either to bound the number of primes that can divide their Tamagawa numbers or, when such a bound is not possible, to find infinite subfamilies whose Tamagawa numbers are as small as possible. Finally, we also investigate Tamagawa numbers of specializations of elliptic surfaces.
\end{abstract}

\section{Introduction}

Let $E/\mathbb{Q}$ be an elliptic curve and $p$ be a prime number. Let $E_0(\mathbb{Q}_p)$ be the subgroup of $E(\mathbb{Q}_p)$ consisting of points with non-singular reduction. It is well known, see e.g. \cite[Section VII.6]{aec}, that $E_0(\mathbb{Q}_p)$ has finite index in $E(\mathbb{Q}_p)$. We call $c_{p}(E)=[E(\mathbb{Q}_p):E_0(\mathbb{Q}_p)]$ the Tamagawa number of $E/\mathbb{Q}$ at $p$. Alternatively, one can define the Tamagawa number of $E/\mathbb{Q}$ at $p$ as $|\mathcal{E}_{\mathbb{F}_p}(\mathbb{F}_p)/ \mathcal{E}(\mathbb{F}_p)|$, where $\mathcal{E}_{\mathbb{F}_p}/\mathbb{F}_p$ is the special fiber of the Néron model of $E/\mathbb{Q}$ at $p$
and $\mathcal{E}_{\mathbb{F}_p}/\mathbb{F}_p$ is the connected component of the identity of $\mathcal{E}_{\mathbb{F}_p}/\mathbb{F}_p$ (see \cite[Corollary IV.9.2]{silverman2} for a proof that these two definitions are equivalent). 

We define the (global) Tamagawa number of $E/\mathbb{Q}$ as $c(E):=\prod_{p} c_{p}(E)$, where the product is taken over all the primes of bad reduction of $E/\mathbb{Q}$. Lorenzini in \cite{lor} studied possible cancellations in the quotient $c(E)/|E(\mathbb{Q})_{\mathrm{tors}}|$, where $E(\mathbb{Q})_{\mathrm{tors}}$ is the torsion subgroup of $E/\mathbb{Q}$. As motivation behind such an investigation, we note that the fraction $c(E)/|E(\mathbb{Q})_{\mathrm{tors}}|$ appears in the Birch and Swinnerton-Dyer Conjecture \cite[Appendix C.16]{aec}.

Among other results, Lorenzini in \cite{lor} proved that for every prime $N \geq 5$ if $E/\mathbb{Q}$ is an elliptic curve with a $\mathbb{Q}$-rational point of order $N$, then $N $ divides $c(E)$, with only finitely many exceptions. Results of similar flavor for elliptic curves over number fields have been proved by Krumm, Najman, and the current author in \cite{Krummthesis}, \cite{najmantamawanumber}, and \cite{mentzelosarch}, respectively. On the other hand, Tamagawa numbers of elliptic curves with isogenies have been studied by Barrios and Cullinan, and by Trbović in \cite{barrioscullinantamagawanumbersellipticcurves} and \cite{trbovic}, respectively. In this article, we study the divisibility properties of Tamagawa numbers of elliptic curves that have either isogenies or torsion points. We focus here on two directions. First, on excluding primes that divide Tamagawa numbers of elliptic curves with isogenies. Second, on finding infinite families of elliptic curves with Tamagawa numbers which are as small as possible.

We first study Tamagawa numbers of elliptic curves over $\mathbb{Q}$ that have cyclic $\mathbb{Q}$-rational isogenies. The possible degrees of $\mathbb{Q}$-rational isogenies of elliptic curves defined over $\mathbb{Q}$ have been classified by Mazur, Kenku, and others (see \cite[Theorem 1]{mazur} and \cite[Theorem 1]{kenku} for more information). Namely, if $E/\mathbb{Q}$ is an elliptic curve with a $\mathbb{Q}$-rational isogeny of degree $N$, then $N \leq 19$, or $N= 21, 25, 27, 37, 43, 67, 163$. Our theorem below concerns how the existence of such isogenies imposes restrictions on the primes that can divide their Tamagawa numbers.

\begin{theorem}\label{theoremisogeniesalldegrees}
    Let $E/\mathbb{Q}$ be an elliptic curve with a cyclic $\mathbb{Q}$-rational isogeny of degree $N$.
    \begin{enumerate}
        \item If $N=14, 19, 43, 67,$ or $163$, then $c(E)=2^n$, for some $n \geq 1$.
        \item If $N=11,27, 37$, then $c(E)=2^n3^m$, for some $n \geq 0$ and $m \in \{0,1\}$.
        \item If $N=17$, then $c(E)=2^n3^m17^k$, for some $n \geq 1$ and $m,k \in \{0,1\}$.
        \item If $N=21$, then $c(E)=2^n3^m7^k$, for some $n \geq 1$, $m \in \{0,1,2 \}$, and $k \in \{0,1\}$.
        \item If $\ell=3$ or $5$ and $N=3\ell$, then $c(E)=2^n3^m\ell^k$, for some $n \geq 1$, $m \in \{0,1,2 \}$, and $k \in \{0,1\}$.
    \end{enumerate}
\end{theorem}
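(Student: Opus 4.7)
The plan is to proceed case by case on $N$, exploiting the fact that elliptic curves with a cyclic $\mathbb{Q}$-rational $N$-isogeny are parameterized by the non-cuspidal rational points of the modular curve $X_0(N)$. The key dichotomy is whether $X_0(N)$ has positive genus---so that only finitely many non-cuspidal $j$-invariants occur---or genus zero, giving an infinite one-parameter family. All values of $N$ in the theorem except $N=9$ fall into the first case, which should make them tractable by direct computation.

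For $N \in \{11,14,17,19,27,37,43,67,163\}$, and also for $N=15$ in item (4), the non-cuspidal $\mathbb{Q}$-points of $X_0(N)$ form a finite and well-known list, which includes CM points for $N \in \{37,43,67,163\}$. For each $j$-invariant $j_0$ in this list, any elliptic curve $E/\mathbb{Q}$ with $j(E)=j_0$ is a quadratic twist of a fixed minimal model $E_0/\mathbb{Q}$, since $j_0 \notin \{0,1728\}$ in all relevant cases. I would then run Tate's algorithm on $E_0$ and apply the standard recipe for how Kodaira types and component groups transform under quadratic twisting, producing a finite list of possibilities for $c_p(E)$ at every prime $p$ of bad reduction. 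Combining these possibilities across primes yields the claimed factorization. In particular, the CM cases have potentially good reduction everywhere, giving only additive Kodaira types at the finitely many primes of bad reduction; a direct computation on the resulting short list of models rules out types $IV$ and $IV^*$ with $c=3$ and shows that the component groups have $2$-power order, giving $c(E)=2^n$. The results of Barrios--Cullinan \cite{barrioscullinantamagawanumbersellipticcurves} and of Trbovi\'c \cite{trbovic} provide useful shortcuts for parts of this analysis.

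The remaining case is $N=9$ in item (4), the only genuinely infinite family. Here I would use an explicit parameterization of the universal elliptic curve with a cyclic $9$-isogeny over $X_0(9)$ and express $c_4$, $c_6$, and $\Delta$ as polynomials in a rational parameter. Tate's algorithm applied at a general prime $p$ in terms of the $p$-adic valuations of these polynomials then gives the possible Kodaira types. The crucial structural input is that a cyclic $9$-isogeny $\phi\colon E \to E'$ relates the Kodaira types of $E$ and $E'$: at a prime of potentially multiplicative reduction, Tate's uniformization forces the type $I_n$ on $E$ to correspond to $I_{9n}$ or $I_{n/9}$ on $E'$, so the $3$-part of the component group at that prime is controlled by one of the two sides. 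At primes of additive reduction the Tamagawa number lies in $\{1,2,3,4\}$, and in particular only the primes $2$ and $3$ can divide $c(E)$.

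The main obstacle will be making the exponent bounds precise, especially the combined bound on the $3$-exponent $m+k\leq 3$ in item (4) for $N=9$, and the sharp bounds $m\leq 1$ in item (2) and $k\leq 1$ in items (3) and (4). One must rule out the simultaneous appearance of Tamagawa contributions of $3$ (or $\ell$, or $17$) at multiple primes that would push the relevant exponent beyond the stated range. This is handled by combining the global isogeny analysis above with explicit local computations at the small set of primes dividing the discriminant of the universal family (or, in items (1)--(3), of each fixed minimal model). The analogous bounds in items (1)--(3) are easier to pin down since only finitely many $j$-invariants are in play, so a direct case-by-case verification with Tate's algorithm on each model and its quadratic twists suffices.
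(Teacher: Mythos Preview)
Your approach for the cases with finitely many $j$-invariants is essentially the paper's: list the $j$-invariants parametrized by the non-cuspidal rational points of $X_0(N)$, note that every such curve is a quadratic twist of a fixed minimal model since none of these $j$-invariants equals $0$ or $1728$, and read off the possible local Tamagawa numbers from the Kodaira types via Tate's algorithm and the standard twist tables. The paper cites its predecessor \cite{mentzelosisogenies} for the reduction-type analysis when $N \in \{11,17,19,37,43,67,163\}$ and carries it out directly here for $N \in \{14,15,21,27\}$, but the underlying method is the same as what you describe.

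The difficulty is item (4). The statement as printed contains a typo: it should read ``$\ell = 5$ or $7$'' (so $N = 15$ or $21$), not ``$\ell = 3$ or $5$''. Both $X_0(15)$ and $X_0(21)$ have only finitely many non-cuspidal rational points, and the paper handles them by the same finite quadratic-twist analysis (Propositions \ref{propdegree15} and \ref{propdegree21}); once $N=21$ is included in your finite list, the whole theorem falls under the method you already outlined. The case $N = 9$ you devote most of your effort to is not part of the theorem and in fact cannot be: the paper itself remarks immediately after the statement that for $N' \in \{4,5,6,7,8,9,10,12\}$ every prime divides $c(E)$ for infinitely many curves with a cyclic $N'$-isogeny, and proves this in Proposition \ref{propositiondivisors}. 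Your sketch for $N=9$ would correspondingly break down at primes of split multiplicative reduction, where the Tamagawa number equals $n$ for type $\mathrm{I}_n$ and $n$ may be divisible by an arbitrary prime; the conclusion that ``only the primes $2$ and $3$ can divide $c(E)$'' simply does not follow from the isogeny considerations you invoke.
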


    Let $N'=4,5,6,7,8,9,10$ or $12$. As we explain in Proposition \ref{propositiondivisors}, for every prime $p$ there exist infinitely many elliptic curves $E/\mathbb{Q}$ with a $\mathbb{Q}$-rational point of order $N'$ (and hence with a cyclic $\mathbb{Q}$-rational isogeny of order $N'$) such that $p$ divides $c(E)$. Thus, the above theorem cannot be extended to elliptic curves with isogenies of degree $N'=4,5,6,7,8,9,10$ or $12$. On the other hand, we show  that for each $\ell=5,7,$ or $13$, there exist infinitely many elliptic curves $E/\mathbb{Q}$ (with distinct $j$-invariants) that have a $\mathbb{Q}$-rational isogeny of degree $\ell$ such that $c(E)=2^n3^m$, for some $n,m$ (see Proposition \ref{prop5713} below).

We now turn our attention to elliptic curves with torsion points. Since it is not possible to bound the primes that can divide their Tamagawa numbers, the next best question to ask is whether we can always find infinite families of elliptic curves with torsion points and Tamagawa numbers as small as possible. As a step towards this direction we offer the following theorem.

\begin{theorem}\label{theoreminfinitefamiliestorsion}
    \begin{enumerate}
        \item There exist infinitely many elliptic curves $E/\mathbb{Q}$ with a $\mathbb{Q}$-rational point of order $4$ and such that $c(E) =4, 8,$ or $12$.
        \item There exist infinitely many elliptic curves $E/\mathbb{Q}$ with a $\mathbb{Q}$-rational point of order $5$ such that $c(E) \leq 30$.
    \end{enumerate}
\end{theorem}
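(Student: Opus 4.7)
The plan is to exhibit explicit infinite subfamilies of the universal families of elliptic curves with a rational point of order $4$ and $5$, respectively, using the Tate normal forms, and to control the Tamagawa numbers at each bad prime via Tate's algorithm.

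For part (i), every elliptic curve $E/\mathbb{Q}$ with a rational $4$-torsion point admits a model of the form
\[
E_t \colon\ y^2 + xy - t y = x^3 - t x^2,
\]
with $P_t = (0,0)$ of order $4$, for some $t \in \mathbb{Q} \setminus \{0, -1/16\}$. A direct computation gives discriminant $\Delta_t = t^4(16 t + 1)$. I would restrict $t$ to an infinite set of integers with $16t + 1$ squarefree, $t$ squarefree (and automatically coprime to $16 t +1$), and $t$ in a fixed residue class modulo a small power of $2$ that pins down the Kodaira type at $2$; such a set is infinite by elementary sieve arguments. For each such $t$ the primes of bad reduction are $2$, the primes $p \mid t$, and the primes $p \mid 16t+1$. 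At each $p \mid 16t + 1$ one has $v_p(\Delta_t) = 1$, hence Kodaira type $I_1$ and $c_p = 1$. At each odd prime $p \mid t$ the curve has multiplicative reduction of type $I_4$, with the $4$-torsion point $P_t$ reducing nontrivially into the cyclic component group; controlling split versus non-split via a congruence condition on $t$ keeps $c_p \in \{1,2,4\}$. Finally, $c_2$ is computed by Tate's algorithm from $t \bmod 2^k$. By further restricting the congruence class of $t$ so that the product of local contributions equals $4$, $8$, or $12$, one obtains the desired infinite family.

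For part (ii), the analogous Tate normal form is
\[
E_t \colon\ y^2 + (1 - t)xy - t y = x^3 - t x^2,
\]
with $P_t = (0,0)$ of order $5$, and a direct computation gives a discriminant of the shape $\Delta_t = t^5 f(t)$ where $f(t)$ is a quadratic polynomial coprime to $t$ in $\mathbb{Q}[t]$. The same strategy applies: restrict $t$ to an infinite set of integers with $f(t)$ squarefree and coprime to $t$, and $t$ in a fixed residue class modulo a small power of $2, 3, 5$. At primes $p \mid f(t)$ one obtains $I_1$ reduction and $c_p = 1$. At odd primes $p \mid t$ with $v_p(t) = 1$, the reduction is of type $I_5$ with $c_p \in \{1, 5\}$. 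The contribution at the residue characteristics $2, 3, 5$ is controlled by the fixed congruence and, on a further restriction of the family, can be kept to at most a factor of $6$. Combining the local factors yields $c(E_t) \leq 30$ on an infinite subfamily.

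The main obstacle in both parts is controlling the contribution at the small primes $2$ (resp.\ $2, 3, 5$), where the Kodaira type may be additive and the component-group structure is sensitive to $t$ modulo high powers of these primes. This is resolved by fixing $t$ in a single residue class that makes Tate's algorithm yield a predictable output; within each such class, the squarefreeness conditions on the remaining factor of the discriminant hold for an infinite subset by standard sieving. Distinctness of $j$-invariants across the resulting family is immediate since $j(t)$ is a non-constant rational function of $t$, so only finitely many parameter values can produce any given curve up to isomorphism.
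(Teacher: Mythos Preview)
Your overall setup---Tate normal form, discriminant factorization, local analysis via Tate's algorithm---matches the paper's, but there is a genuine gap in how you restrict the parameter $t$. You take $t$ squarefree and observe that each odd prime $p\mid t$ gives type $\mathrm{I}_4$ (resp.\ $\mathrm{I}_5$). However, the \emph{number} of such primes is not bounded by any congruence condition, and each one contributes a nontrivial factor to $c(E_t)$. In fact the reduction at every $p\mid t$ is automatically \emph{split} multiplicative: modulo $p$ both curves reduce to $y^2+xy=x^3$, whose node has rational tangent directions $y=0$ and $y=-x$. Hence $c_p=4$ (resp.\ $c_p=5$) at every such prime, so if $t$ has $k$ distinct odd prime factors then $c(E_t)\ge 4^k$ (resp.\ $5^k$), which is unbounded. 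Your suggestion of ``controlling split versus non-split via a congruence condition on $t$'' cannot work here: the reduction is split regardless, and in any case one cannot impose congruence conditions on $t$ modulo the very primes that divide $t$.

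The paper closes this gap by specializing at $T=p$ for a single prime $p$, so that the factor $T^4$ (resp.\ $T^5$) of $\Delta$ contributes exactly one local Tamagawa factor $c_p=4$ (resp.\ $5$). For the remaining cofactor $16p+1$ (resp.\ $p^2-11p-1$) the paper does not use squarefreeness but an almost-prime sieve input: there exist infinitely many primes $p$ with $16p+1\in P_3$ (resp.\ $p^2-11p-1\in P_5$), and a short case analysis on at most $3$ (resp.\ $5$) prime factors with multiplicity then bounds $\prod_{q\mid\text{cofactor}}c_q$ by $3$ (resp.\ $6$). Your squarefree-cofactor idea, \emph{if combined with the restriction that $t$ itself be prime}, would actually yield a sharper conclusion in part~(i) (namely $c(E_t)=4$, since each $\mathrm{I}_1$ gives $c_q=1$ and the reduction at $2$ is good for odd $t$); but as written, with $t$ merely squarefree, the argument does not bound $c(E_t)$ at all.
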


Inspired by the methods used to prove Theorem \ref{theoreminfinitefamiliestorsion}, we also propose a general question concerning Tamagawa numbers of specializations of families of elliptic curves. Let $E/\mathbb{Q}(T)$ be an elliptic curve. Then, for almost all values $t \in \mathbb{Q}$ the specialization $E_t/\mathbb{Q}$ of $E/\mathbb{Q}(T)$ at $T=t$ is an elliptic curve. More precisely, if $E/\mathbb{Q}(T)$ is given by a Weierstrass equation of the form $$y^2+a_1(T)xy+a_3(T)y=x^3+a_2(T)x^2+a_4(T)x+a_6(T),$$ with $a_i(T) \in \mathbb{Q}(T)$ for $i=1,...,6$, then $E_t/\mathbb{Q}$ will be an elliptic curve provided that $a_i(t) \neq \infty $ for all $i=1,...,6$, and that the discriminant $\Delta(t) \neq 0$.  Therefore, one can think of $E/\mathbb{Q}(T)$ as an one-parameter family of elliptic curves. We say that $E/\mathbb{Q}(T)$ is non-isotrivial if $j(E/\mathbb{Q}(T)) \not\in \mathbb{Q}$.

Consider now
$$\mathcal{C}_1=\min \{ c(E_t) \: | \: t \in \mathbb{Q} \text{ and } E_t/\mathbb{Q} \text{ is an elliptic curve} \}   $$
and 
$$\mathcal{C}_2=\min \{ c \: | \: \exists \text{ infinitely many } t \in \mathbb{Q}  \text{ such that  } E_t/\mathbb{Q} \text{ is an elliptic curve with } c(E_t)=c \} .$$

One might hope that $\mathcal{C}_1=\mathcal{C}_2$. However, this is not true in general. For example, for the family consider in the proof of Proposition \ref{propfivetorsion} below, we have that $\mathcal{C}_1=1$ while $\mathcal{C}_2 \geq 5$ (see \cite[Proposition 2.7]{lor}). On the other hand, it can happen that $\mathcal{C}_1=\mathcal{C}_2$ for some families. Indeed, there exist infinitely many elliptic curves $E/\mathbb{Q}$ which belong to one-parameter families of elliptic curves with $\mathbb{Q}$-rational points of order $3$, respectively $2$, such that $c(E)=1$, see \cite[Lemma 2.26]{lor}, respectively \cite[Corollary 5.3]{barriosroy}. 

It seems likely that the difference between $\mathcal{C}_1$ and $\mathcal{C}_2$ can be controlled in a way that depends only on the discriminant of $E/\mathbb{Q}(T)$. We propose this in the following question.

\begin{questiona*}
    Let $E/\mathbb{Q}(T)$ be a non-isotrivial elliptic curve and let $\Delta(T)$ be the discriminant of a Weierstrass equation for $E/\mathbb{Q}(T)$ which is minimal at all places of $\mathbb{Q}[T]$. Does the exist a constant $d$, that depends only on $\mathrm{deg}( \Delta)$, such that $$\mathcal{C}_2 \leq \mathcal{C}_1+d?$$
\end{questiona*}

Our theorem below provides some evidence that the previous question might have a positive answer. Before we state our result, let us fix the following notation; if $F(x)$ is a polynomial in $\mathbb{Z}[x]$, then we will write $\rho_F(p)$ for the number of solutions of the congruence $$F(x) \equiv 0 \: ( \text{mod } p).$$ 

 \begin{theorem}\label{generalthm}
    Let $E/\mathbb{Q}(T)$ be a non-isotrivial elliptic curve and let $\Delta(T)$ be the discriminant of a Weierstrass equation for $E/\mathbb{Q}(T)$ which is minimal at all places of $\mathbb{Q}[T]$. Write $\Delta(T)=mF_1^{m_1}(T) \cdots F_g^{m_g}(T)$, where $F_1(x), F_2(x),...,F_g(x) $ be distinct irreducible polynomials with integral coefficients and $m \in \mathbb{Z}$. Let $F(x)=F_1(x)F_2(x) \cdots F_g(x)$ and assume that $\rho_F(p)<p$ for every prime $p$. Then there exists a positive integer $s$ that can be explicitly computed and depends on $\Delta(T)$ such that there exist infinitely many $n \in \mathbb{N}$ with $$c(E_n) \leq 16 (\log_2(m)+ s \: \mathrm{deg}(\Delta))^{d(m)+s},$$ where $d(m)$ is the number of primes that divide $m$. In particular, we have that $$\mathcal{C}_2 \leq \mathcal{C}_1+16 (\log_2(m)+ s \: \mathrm{deg}(\Delta))^{d(m)+s}.$$
\end{theorem}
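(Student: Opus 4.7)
The plan is to exhibit infinitely many integer specializations $E_n/\mathbb{Q}$ whose Tamagawa number we can bound by controlling (a) the set of bad primes of $E_n$ and (b) the local contribution $c_p(E_n)$ at each such prime. Since the bad primes of $E_n$ lie among the prime divisors of $\Delta(n) = m F_1(n)^{m_1}\cdots F_g(n)^{m_g}$, and since the hypothesis $\rho_F(p) < p$ for every prime $p$ is precisely the condition needed to invoke a Richert-style almost-prime sieve, I would apply such a sieve to $F(x) = F_1(x) \cdots F_g(x)$. This furnishes an explicit constant $s$, depending only on $\deg(F) \leq \deg(\Delta)$, together with infinitely many $n \in \mathbb{N}$ for which $F(n)$ is a $P_s$-number, i.e.\ has at most $s$ prime factors counted with multiplicity. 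For any such $n$, the number of distinct bad primes of $E_n$ is at most $d(m) + s$.

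For each bad prime $p$, Kodaira's classification together with Tate's algorithm gives $c_p(E_n) \leq v_p(\Delta_{\min}(E_n))$ (with equality in the split multiplicative case; in the additive case a direct case check across the Kodaira types $II, III, IV, I_0^*, I_n^*, IV^*, III^*, II^*$ shows $c_p \leq 4 \leq v_p(\Delta_{\min})$). Since specializing a $\mathbb{Q}(T)$-minimal model can only lose $12$th-power factors, $\Delta_{\min}(E_n) \mid \Delta(n)$ in $\mathbb{Z}$, and hence $v_p(\Delta_{\min}(E_n)) \leq v_p(\Delta(n))$. Expanding $v_p(\Delta(n)) = v_p(m) + \sum_i m_i v_p(F_i(n))$ and using $v_p(m) \leq \log_2(m)$, $\max_i m_i \leq \deg(\Delta)$, together with $\sum_i v_p(F_i(n)) = v_p(F(n)) \leq s$, I obtain $c_p(E_n) \leq \log_2(m) + s\deg(\Delta)$. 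Multiplying over the at most $d(m) + s$ bad primes yields the main estimate; the universal constant $16$ in the statement functions as a cushion for the small-parameter regime in which $\log_2(m) + s\deg(\Delta) < 4$ and the coarser bound $c_p \leq 4$ must be applied. The ``in particular'' clause then follows from pigeonhole: among the infinitely many $n$ satisfying the same upper bound on $c(E_n)$, some single value of $c(E_n)$ is realized infinitely often, yielding the stated bound on $\mathcal{C}_2$.

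The main obstacle will be extracting an explicit, clean value of $s$ as a function of $\deg(F)$ (and hence of $\deg(\Delta)$) from the almost-prime sieve literature: the hypothesis $\rho_F(p) < p$ is exactly what Richert-type theorems require, but turning the sieve output into a concrete quantity that depends only on $\deg(\Delta)$ takes some bookkeeping. A secondary, but routine, point is verifying the minimal-model comparison $\Delta_{\min}(E_n) \mid \Delta(n)$ uniformly in $n$ when $\Delta(T)$ is the minimal discriminant over $\mathbb{Q}(T)$ but the specialized Weierstrass equation need not be minimal at individual primes. Once $s$ is in hand, the remainder of the argument is a direct assembly of standard Kodaira/Tate bounds.
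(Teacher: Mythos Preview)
Your approach is essentially the paper's: invoke a Richert--type almost-prime sieve (the paper cites Halberstam--Richert, Theorem~10.4) to obtain infinitely many $n$ with $F(n)\in P_s$, bound each local $c_p(E_n)$ in terms of $v_p(\Delta(n))$, bound $v_p(\Delta(n))\le \log_2(m)+s\deg(\Delta)$, and multiply over the at most $d(m)+s$ bad primes; the ``in particular'' clause is then immediate.

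One correction on the factor $16$: it is not a cushion for the regime $\log_2(m)+s\deg(\Delta)<4$, but arises specifically from the primes $p\in\{2,3\}$. The inequality $c_p\le v_p(\Delta_{\min})$ that you invoke is only cited in the standard references for $p\ge 5$; for $p=2,3$ with additive reduction the paper uses instead $c_p\le 4\le 4\,v_p(\Delta(n))$, and the two potential factors of $4$ give $16$. Relatedly, your parenthetical justification ``$c_p\le 4\le v_p(\Delta_{\min})$'' already fails for Kodaira types~II and~III even at $p\ge 5$ (where $v_p(\Delta_{\min})=2,3$ respectively), although the conclusion $c_p\le v_p(\Delta_{\min})$ does hold there by direct inspection.
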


When $\Delta(T)$ is either a quadratic or cubic polynomial, under some (mild) assumptions, we prove that the term $16 (\log_2(m)+ s \: \mathrm{deg}(\Delta))^{d(m)+s}$ in Theorem \ref{generalthm} can be improved to $16 (\log_2(m)+1)^{d(m)}$ (see Theorem \ref{theoremspecialization}). Finally, relying on the abc-conjecture we show that, assuming that the discriminant $\Delta(T)$ of a Weierstrass equation which is minimal at all places of $\mathbb{Q}[T]$ has no repeated roots and that the largest integer that divides all elements of the set $\{ \Delta(s) \: | \: s \in \mathbb{Z}\}$ is square-free, there exist infinitely many specializations with Tamagawa number $1$.

\begin{acknowledgement}
    The author would like to thank Dino Lorenzini for some very helpful comments on an earlier version of this manuscript.
\end{acknowledgement}

\section{Tamagawa Numbers and Isogenies}

Let $N$ be a positive integer and let $E/\mathbb{Q}$ be an elliptic curve with a $\mathbb{Q}$-rational isogeny of degree $N$. Let $X_0(N)$ be the modular curve parametrizing elliptic curves with an isogeny of degree $N$. We refer the reader to \cite{shimurabook} or \cite{katzmazur} for general background on modular curves. In \cite[Section 3]{mentzelosisogenies}, using explicit equations for $X_0(N)$ the possible reduction types of $E/\mathbb{Q}$ when $N$ is equal to $11, 17, 19, 37, 43, 67,$ or $163$ where computed. Combining this technique with with known results connecting reduction types and Tamagawa numbers, in this section, we prove Theorem \ref{theoremisogeniesalldegrees}. We then consider the case where $N=5,7,$ or $13$ and we prove that there exist infinitely many elliptic curves $E/\mathbb{Q}$ (with distinct $j$-invariants) that have a $\mathbb{Q}$-rational isogeny of degree $\ell$ such that $c(E)=2^n3^m$, for some $n,m$. Finally, for elliptic curves with a torsion point of order $7$ we prove a result that relates their rank with the $7$-adic valuation of their Tamagawa numbers (assume finiteness of all relevant Tate-Shafarevich groups).

      In Table 1 below we collect several important observations that come from an algorithm of Tate \cite{tatealgorithm}, and which will be used repeatedly. The reader is referred to \cite[Section IV.9]{silverman2} for more information (and proofs). 

      \begin{table}[ht]\label{table1}
       \begin{center}
    \begin{tabular}{ ||c|c || } 
    \hline \hline
    Reduction type of $E/\mathbb{Q}$ modulo $p$ & Tamagawa number of $E/\mathbb{Q}$ modulo $p$ \\ 
    \hline
    \hline
     \textup{I}$_0$ & 1 \\ 
    split \textup{I}$_n$ & $n$  \\
    non-split \textup{I}$_n$ with $n$ even & $2$  \\
    non-split \textup{I}$_n$ with $n$ odd & $1$  \\
    \textup{II} & $1$ \\
    \textup{III} & $2$  \\
    \textup{IV} & $1$ or $3$  \\
    \textup{I}$_n^*$  & $2$ or $4$  \\
    \textup{II}$^*$  & $1$  \\
    \textup{III}$^*$  & $2$  \\
    \textup{IV}$^*$  & $1$ or $3$  \\
    \hline
   \end{tabular}
   \caption{Reduction types and Tamagawa numbers}
   \label{table}
\end{center}
\end{table}

We are now ready to proceed to our results.

      \begin{proposition}\label{manyell}
          Let $\ell$ be equal to $11, 19, 43, 67,$ or $163$, and let $E/\mathbb{Q}$ be an elliptic curve with a $\mathbb{Q}$-rational isogeny of degree $\ell$. 
          \begin{enumerate}
              \item If $p \neq 2, \ell$ is a prime, then $c_p(E)=1,2,$ or $4$.
              \item If $\ell=11$, then $c_{\ell}(E)=1,2, $ or $3$.
              \item If $\ell= 19, 43, 67,$ or $163$, then $c_{\ell}(E)=2$
              \item We have that $c_{2}(E)=1,2,$ or $4$.
          \end{enumerate}
          In particular, if $\ell=11$, then $c(E)=2^n3^m$, for some $n \geq 0$ and $m \in \{0,1\}$. Moreover, if $\ell= 19, 43, 67,$ or $163$, then $c(E)=2^n$, for some $n \geq 1$.
      \end{proposition}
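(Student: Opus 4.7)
The plan is to feed the reduction-type classification established in \cite[Section~3]{mentzelosisogenies} into the Kodaira-to-Tamagawa dictionary recorded in Table~\ref{table}. For each $\ell \in \{11, 19, 43, 67, 163\}$, that earlier work uses explicit plane models of $X_0(\ell)$ together with Tate's algorithm to enumerate, at every rational prime $p$, the Kodaira reduction types that can occur for an elliptic curve $E/\mathbb{Q}$ admitting a $\mathbb{Q}$-rational isogeny of degree $\ell$. I would take this per-prime list as the main input and verify, case by case, that the corresponding Tamagawa numbers fit the bounds asserted in (i)--(iv).

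First I would handle primes $p \neq 2, \ell$. From the admissible Kodaira types at such $p$, I would check through Table~\ref{table} that the only possible values of $c_p(E)$ are $1, 2,$ or $4$; concretely this amounts to verifying that the classification excludes types $IV$, $IV^*$, and $\text{split } I_n$ with $n \geq 3$ and $n \neq 4$ at the primes of bad reduction. Next, at $p = \ell$, I would split into two cases: for $\ell = 11$, the admissible Kodaira types should give $c_{11}(E) \in \{1, 2, 3\}$ so that the exponent of $3$ is at most one; for $\ell \in \{19, 43, 67, 163\}$, the classification should collapse to Kodaira types whose Tamagawa number is forced to equal $2$ (e.g.\ type $III$, $III^*$, or non-split $I_n$ with $n$ even). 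Finally, for $p = 2$, a similar cross-reference with Table~\ref{table} should yield $c_2(E) \in \{1, 2, 4\}$. The ``in particular'' statements then follow at once by multiplying the per-prime bounds via $c(E) = \prod_{p} c_p(E)$.

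The main obstacle is essentially packaged inside the input drawn from \cite{mentzelosisogenies}: the explicit determination of which Kodaira types arise at each prime, which rests on a concrete analysis of rational points on $X_0(\ell)$ combined with Tate's algorithm. Once that is taken as given, the present proposition reduces to careful bookkeeping through Table~\ref{table}. The most delicate step is the exclusion, at primes $p \neq 2, \ell$, of types $IV$, $IV^*$, and $\text{split } I_n$ with $n \geq 3, n \neq 4$: this is precisely what prevents a factor of $3$ or of any prime $\geq 5$ from creeping into $c_p(E)$ at such $p$, and so is what forces the strong shape $c(E) = 2^n$ (respectively $c(E) = 2^n 3^m$ with $m \leq 1$) claimed at the end.
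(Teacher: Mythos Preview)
Your plan is correct and matches the paper's proof essentially line for line: both feed the per-prime Kodaira classification from \cite[Theorems~3.3 and~3.4]{mentzelosisogenies} into Table~\ref{table}. The paper is slightly more concrete than your outline---for $p\neq 2,\ell$ it records that the only possible types are good or $\mathrm{I}_0^*$, for $p=\ell\in\{19,43,67,163\}$ the types are exactly $\mathrm{III}$ or $\mathrm{III}^*$, and for $p=2$ the types are good, $\mathrm{I}_4^*$, $\mathrm{I}_8^*$, $\mathrm{II}$, or $\mathrm{II}^*$---but this is precisely the bookkeeping you describe, and no additional idea is needed.
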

      \begin{proof}
          Proof of $(i)$: Let $p \neq 2,\ell$ be a prime. According to \cite[Theorems 3.3 \& 3.4]{mentzelosisogenies} the curve $E/\mathbb{Q}$ has either good reduction or reduction of type \textup{I}$_0^*$ modulo $p$. Therefore, using Table \ref{table} we see that $c_p(E)=1,2$ or $4$. 

           Proof of $(ii)$: It follows from \cite[Theorem 3.4, Part $(ii)$]{mentzelosisogenies} that $E/\mathbb{Q}$ has reduction of type  \textup{II}, \textup{II}$^*$, \textup{III}, \textup{III}$^*$, \textup{IV}, or \textup{IV}$^*$ modulo $11$. Therefore, using Table \ref{table} we see that $c_{11}(E)=1,2, $ or $3$.

           Proof of $(iii)$: Using \cite[Theorem 3.3, part $(ii)$]{mentzelosisogenies},  we obtain that $E/\mathbb{Q}$ has reduction of type \textup{III} or \textup{III}$^*$ modulo $\ell$. Combining this with Table \ref{table} we get that

           Proof of $(iv)$: From \cite[Theorems 3.3 \& 3.4]{mentzelosisogenies}, we see that $E/\mathbb{Q}$ has either good reduction or reduction of type \textup{I}$_4^*$, \textup{I}$_8^*$, \textup{II}, or \textup{II}$^*$ modulo $2$. Thus, using Table \ref{table} we find that $c_{2}(E)=1,2,$ or $4$.

           The last part of the proposition is immediate since $c(E)=\prod_p c_p(E)$ and $c_p(E)=1$ for all but finitely many $p$.
      \end{proof}

      \begin{remark}
           The curves $E_1/\mathbb{Q}$, $E_2/\mathbb{Q}$, and $E_3/\mathbb{Q}$ with LMFDB \cite{lmfdb} labels\href{https://www.lmfdb.org/EllipticCurve/Q/121/a/2}{121.a2}, \href{https://www.lmfdb.org/EllipticCurve/Q/121/b/1}{121.b1}, \href{https://www.lmfdb.org/EllipticCurve/Q/20449/c/1}{20449.c1}, respectively, have a $\mathbb{Q}$-rational isogeny of degree $11$ and $c(E_1)=1$, $c(E_2)=2$, $c(E_3)=3$. 
      \end{remark}

      The proofs of the two propositions below are entirely analogous to the proof of Proposition \ref{manyell}, using \cite[Theorem 3.6 and Theorem 3.7]{mentzelosisogenies} combined with Table \ref{table} above. We will omit the details of the proofs.

      \begin{proposition}\label{propdegree17}
         Let $E/\mathbb{Q}$ be an elliptic curve with a $\mathbb{Q}$-rational isogeny of degree $17$. 
          \begin{enumerate}
              \item If $p \neq 2, 5, 17$ is a prime, then $c_p(E)=1,2,$ or $4$.
              \item The curve $E/\mathbb{Q}$ has $c_2(E)=1,2,4,$ or $17$.
              \item The curve $E/\mathbb{Q}$ has $c_5(E)=2$.
              \item The curve $E/\mathbb{Q}$ has $c_{17}(E)=1$ or $3$.
          \end{enumerate}
          In particular, $c(E)=2^n3^m17^k$, for some $n \geq 1$ and $m,k \in \{0,1\}$.
      \end{proposition}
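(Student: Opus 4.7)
The plan is to follow exactly the template of Proposition \ref{manyell}: for each prime $p$ separately, invoke the classification of possible Kodaira reduction types of $E/\mathbb{Q}$ modulo $p$ given by Theorem 3.6 of \cite{mentzelosisogenies} (which applies precisely because $E$ carries a $\mathbb{Q}$-rational $17$-isogeny), and then read off the Tamagawa number possibilities from Table \ref{table}. Since the desired conclusion imposes different constraints at the primes $2$, $5$, $17$ and at all other primes, the proof naturally splits into four cases matching parts $(i)$--$(iv)$.

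For part $(i)$, I would argue that for any prime $p \notin \{2,5,17\}$ the cited theorem forces the reduction to be either good or of the types $\mathrm{I}_0^\ast$, $\mathrm{II}$, $\mathrm{II}^\ast$, $\mathrm{III}$, or $\mathrm{III}^\ast$. Each of these lines of Table \ref{table} yields $c_p(E) \in \{1,2,4\}$. For part $(iii)$, the theorem should pin the reduction at $5$ down to $\mathrm{III}$ or $\mathrm{III}^\ast$, both of which give $c_5(E)=2$. For part $(iv)$, the allowed reduction types at $17$ should be good, $\mathrm{II}$, $\mathrm{II}^\ast$, $\mathrm{IV}$, or $\mathrm{IV}^\ast$, so Table \ref{table} yields $c_{17}(E) \in \{1,3\}$. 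The most delicate case is part $(ii)$: the value $c_2(E)=17$ must come from a split multiplicative reduction of type $\mathrm{I}_{17}$, so I would expect Theorem 3.6 of \cite{mentzelosisogenies} to list the possibilities at $2$ as good reduction, various additive types giving Tamagawa number $1$, $2$, or $4$, together with $\mathrm{I}_{17}$ (split) whose Tamagawa number is $17$ by Table \ref{table}.

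The final ``in particular'' assertion is then immediate: $c(E)=\prod_p c_p(E)$ is a finite product, all but finitely many factors equal $1$, and the previous four bullets show each prime factor of $c(E)$ lies in $\{2,3,17\}$ with the exponent of $3$ and of $17$ at most $1$ (because $3$ can only arise at $p=17$, contributing at most one factor, and $17$ can only arise at $p=2$, again contributing at most one factor). The bound $n\geq 1$ follows from the fact that at at least one prime (namely $p=5$) one already has $c_5(E)=2$, guaranteeing a factor of $2$ in $c(E)$.

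The only real obstacle is verifying that Theorem 3.6 of \cite{mentzelosisogenies} genuinely produces the restricted reduction-type lists claimed above; this is exactly the black-box input the author relies on, and the remainder of the argument is a mechanical table lookup together with the elementary observation that only $p=17$ can contribute the prime $3$ and only $p=2$ can contribute the prime $17$ to $c(E)$.
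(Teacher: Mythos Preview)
Your proposal is correct and follows precisely the approach the paper intends: the paper itself omits the proof entirely, stating only that it is ``entirely analogous to the proof of Proposition~\ref{manyell}, using \cite[Theorem 3.6]{mentzelosisogenies} combined with Table~\ref{table}.'' Your write-up is in fact more explicit than what the paper provides, and your derivation of the ``in particular'' clause (noting that the factor $3$ can only arise at $p=17$, the factor $17$ only at $p=2$, and that $c_5(E)=2$ forces $n\ge 1$) is exactly the intended reasoning.
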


      \begin{proposition}
          Let $E/\mathbb{Q}$ be an elliptic curve with a $\mathbb{Q}$-rational isogeny of degree $37$. 
          \begin{enumerate}
              \item If $p \neq 2, 5, 7$ is a prime, then $c_p(E)=1,2,$ or $4$.
               \item The curve $E/\mathbb{Q}$ has $c_2(E)=1, 2,$ or $4$.
              \item The curve $E/\mathbb{Q}$ has $c_5(E)=2$.
              \item The curve $E/\mathbb{Q}$ has $c_7(E)=1$ or $3$.
          \end{enumerate}
          In particular, $c(E)=2^n3^m$, for some $n \geq 2$ and $m \in \{0,1\}$.
      \end{proposition}

\begin{remark}
     The case where $c_2(E)=17$ in Proposition \ref{propdegree17} does indeed occur. Consider the elliptic curve $E/\mathbb{Q}$ with LMFDB label \href{https://www.lmfdb.org/EllipticCurve/Q/130050/gu/1}{130050.gu1}. The curve $E/\mathbb{Q}$ has a $\mathbb{Q}$-rational isogeny of degree $17$ and $c_2(E)=17$.
\end{remark}

We will now prove analogous results for $N=14, 15,  21, 27$. The method that will be used is similar to the one used in \cite[Section 3]{mentzelosisogenies}. Let us briefly explain some of the main ideas here. Let $N$ be as above and let $X_0(N)/\mathbb{Q}$ be the modular curve parametrizing elliptic curves together with an isogeny of degree $N$. In \cite[Table 4]{lozanorobledo}, we can find the $j$-invariants corresponding to non-cuspidal $\mathbb{Q}$-rational points of $X_0(N)/\mathbb{Q}$, i.e., the $j$-invariants of elliptic curves defined over $\mathbb{Q}$ that have a $\mathbb{Q}$-rational isogeny of degree $N$. 
     
     It follows from \cite[Corollary X.5.4.1]{aec}  that all elliptic curves having the same $j$-invariant are twists of each other. Since all these $j$-invariants coming from \cite[Table 4]{lozanorobledo}, are not equal to $0$ or $1728$, we only need to consider quadratic twists. Finally, we can use Table \ref{table} to infer information for Tamagawa numbers of quadratic twists of elliptic curves.

     If $E/\mathbb{Q}$ is an elliptic curve and $d$ is a square-free integer, then we will denote by $E^d/\mathbb{Q}$ the quadratic twist of $E/\mathbb{Q}$ by $d$. We recall now some of the results from \cite{com} for future reference.

     \begin{lemma}\label{results1comalada}(See \cite[Proposition 1]{com}) Let $E/\mathbb{Q}$ be an elliptic curve and $d$ a square-free integer. If $p \neq 2$ is a prime with $p \mid d$, then the reduction types of $E/\mathbb{Q}$ and $E^d/\mathbb{Q}$ modulo $p$ are related as follows 
     \begin{table}[ht]\label{table2}
     \begin{center}
    \begin{tabular}{ ||c|c || } 
    \hline \hline
    Reduction type of $E/\mathbb{Q}$ modulo $p$ & Reduction type of $E^d/\mathbb{Q}$ modulo $p$ \\ 
    \hline
    \hline
     \textup{I}$_0$ & \textup{I}$_0^*$ \\ 
    \textup{I}$_n$ & \textup{I}$_n^*$  \\
    \textup{II} & \textup{IV}$^*$  \\
    \textup{III} & \textup{III}$^*$  \\
    \textup{IV} & \textup{II}$^*$  \\
    \textup{I}$_0^*$  & \textup{I}$_0$  \\
    \textup{II}$^*$  & \textup{IV}  \\
    \textup{III}$^*$  & \textup{III}  \\
    \textup{IV}$^*$  & \textup{II}  \\
    \hline
   \end{tabular}
\caption{Reduction types and Quadratic Twists}
   \label{table2}
\end{center}
\end{table}
\end{lemma}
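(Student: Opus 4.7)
The plan is to translate the twist into a local invariant computation using the Kodaira--N\'eron classification of reduction types in terms of the $p$-adic valuations of $c_4$, $c_6$, and $\Delta$. I fix an odd prime $p$ with $p \mid d$ and work over $\mathbb{Z}_p$. Since $p$ is odd, $E$ admits an integral model of the form $y^2 = x^3 + a_2 x^2 + a_4 x + a_6$, and a short computation shows that its quadratic twist by $d$ has the integral model $y^2 = x^3 + d\, a_2 x^2 + d^2 a_4 x + d^3 a_6$.

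A direct computation from the standard formulas yields
\[
c_4(E^d) = d^2\, c_4(E), \qquad c_6(E^d) = d^3\, c_6(E), \qquad \Delta(E^d) = d^6\, \Delta(E).
\]
Since $d$ is square-free and $p \mid d$, one has $v_p(d) = 1$, so the triple $(v_p(c_4), v_p(c_6), v_p(\Delta))$ shifts by exactly $(2, 3, 6)$. The rest is a case analysis, reading off the new Kodaira type from the shifted triple via the classification from Tate's algorithm (see e.g. \cite[Section IV.9]{silverman2}).

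When the shifted triple still satisfies the minimality criterion $v_p(\Delta) < 12$, one reads the new type directly: $(0,0,0)$ becomes $(2,3,6)$, sending $\mathrm{I}_0 \mapsto \mathrm{I}_0^*$; similarly $\mathrm{I}_n \mapsto \mathrm{I}_n^*$, $\mathrm{II} \mapsto \mathrm{IV}^*$, $\mathrm{III} \mapsto \mathrm{III}^*$, and $\mathrm{IV} \mapsto \mathrm{II}^*$. In the four remaining cases, where the original type is $\mathrm{I}_0^*$, $\mathrm{II}^*$, $\mathrm{III}^*$, or $\mathrm{IV}^*$, the shifted triple fails minimality, and a single change of variables $(x, y) \mapsto (p^{-2} x, p^{-3} y)$ reduces $(v_p(c_4), v_p(c_6), v_p(\Delta))$ by $(4, 6, 12)$. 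Matching the resulting minimal triple to the classification gives $\mathrm{I}_0$, $\mathrm{IV}$, $\mathrm{III}$, and $\mathrm{II}$ respectively, as the table asserts. Since the twist by $d$ is an involution, this simultaneously settles the opposite direction.

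The main obstacle is bookkeeping: the classification by $(v_p(c_4), v_p(c_6), v_p(\Delta))$ involves several inequality conditions that must be tracked in each branch, for instance to distinguish $\mathrm{I}_0^*$ (where $v_p(c_4) \ge 2$ and $v_p(\Delta) = 6$) from $\mathrm{I}_n^*$ with $n \ge 1$ (where $v_p(c_4) = 2$ sharply). A small additional technicality arises at $p = 3$, where one cannot absorb $a_2$ by a change of variables over $\mathbb{Z}_3$; however, retaining the $a_2 x^2$ term does not affect the transformation formulas for $c_4$, $c_6$, and $\Delta$, so the same case analysis applies verbatim.
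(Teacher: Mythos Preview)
The paper does not give its own proof of this lemma: it is stated with a bare citation to \cite[Proposition 1]{com} and used as a black box. So there is nothing to compare your argument against on the paper's side.

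Your approach is correct and is in fact the standard one. The key observation---that passing from $E$ to $E^d$ at an odd prime $p\mid d$ shifts $(v_p(c_4),v_p(c_6),v_p(\Delta))$ by exactly $(2,3,6)$---is right, and for $p\ge 5$ the Kodaira type is determined by this triple, so the case-by-case matching you describe goes through cleanly. The involution remark is a nice way to cover the $*$-types without redoing the work.

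One point to tighten: your handling of $p=3$ is slightly glib. The issue at $3$ is not that $a_2$ cannot be absorbed (as you note, this is harmless for the transformation formulas); the real issue is that at $p=3$ the triple $(v_3(c_4),v_3(c_6),v_3(\Delta))$ does not always determine the Kodaira type uniquely---Papadopoulos' tables \cite{pap} show that certain triples require an extra congruence condition to distinguish types. So ``the same case analysis applies verbatim'' overstates it. What you actually need is either (a) to check those extra congruences are preserved under $(c_4,c_6,\Delta)\mapsto(d^2 c_4,d^3 c_6,d^6\Delta)$ with $v_3(d)=1$, which they are, or (b) to observe that twisting by $d$ with $p\mid d$ is equivalent to base-changing along the ramified quadratic extension $\mathbb{Q}_p(\sqrt{d})/\mathbb{Q}_p$ and then descending, so the change in Kodaira type is governed by the known behaviour of N\'eron models under tamely ramified base change. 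Either route closes the gap; it just deserves a sentence rather than a wave of the hand.
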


We will also need the following.

\begin{lemma}\label{lemmareduction}
     Let $E/\mathbb{Q}$ be an elliptic curve and $d$ be a square-free number. 
     \begin{enumerate}
     \item Let $p$ be an odd prime with $p \nmid d$ such that $E/\mathbb{Q}$ has good reduction modulo $p$. Then $E^d/\mathbb{Q}$ has good reduction modulo $p$.
         \item If $E/\mathbb{Q}$ has good reduction modulo $2$, then $E^d/\mathbb{Q}$ has either good reduction or reduction of type \textup{I}$_4^*$, \textup{I}$_8^*$, \textup{II}, or \textup{II}$^*$ modulo $2$.
         \item If $E/\mathbb{Q}$ has modulo $2$ reduction of type \textup{I}$_n$ for some $n > 0$, then $E^d/\mathbb{Q}$ has modulo $2$ either reduction of type \textup{I}$_n$ or reduction of type \textup{I}$_n^*$.
     \end{enumerate}
\end{lemma}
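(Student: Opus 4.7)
The plan is to treat each part by a direct local analysis at the prime in question, exploiting the explicit effect of quadratic twisting on a Weierstrass equation. For part (i), since $p$ is odd I would put $E$ in a minimal short Weierstrass model $y^2=x^3+Ax+B$ at $p$; the twist then has the model $y^2=x^3+Ad^2x+Bd^3$ with discriminant $d^6\Delta_E$. As $p\nmid d$ and $v_p(\Delta_E)=0$ by hypothesis, we get $v_p(d^6\Delta_E)=0$, so this model is already minimal at $p$ and $E^d$ has good reduction.

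For parts (ii) and (iii), I would stratify by the class of $d$ in $\mathbb{Q}_2^{\times}/(\mathbb{Q}_2^{\times})^2$, a group of order $8$ comprising the trivial class, one class corresponding to the unique unramified quadratic extension $\mathbb{Q}_2(\sqrt 5)/\mathbb{Q}_2$, and six classes corresponding to ramified quadratic extensions. When $\mathbb{Q}_2(\sqrt d)/\mathbb{Q}_2$ is trivial or unramified, $E^d$ becomes isomorphic to $E$ over $\mathbb{Q}_2^{\mathrm{un}}$, and since the Kodaira symbol is invariant under unramified base change, $E^d$ has the same Kodaira type as $E$. This gives the $\mathrm{I}_0$ alternative of (ii) and the $\mathrm{I}_n$ alternative of (iii). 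When $\mathbb{Q}_2(\sqrt d)/\mathbb{Q}_2$ is ramified, part (iii) follows from Tate uniformization: over $\mathbb{Q}_2^{\mathrm{un}}$ the curve $E$ is $\mathbb{G}_m/q^{\mathbb{Z}}$ with $v_2(q)=n$, and its ramified quadratic twist is the non-split form of this torus quotient, whose Néron model has Kodaira type $\mathrm{I}_n^*$ with the same index $n$.

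The remaining case, the ramified-twist subcase of (ii), is the hardest and is where the lemma imposes its strongest restriction. My approach would be a finite, explicit case analysis: for each of the six ramified twist classes (say with representatives $-1,\pm 2,\pm 10,-5$) run Tate's algorithm on the twisted Weierstrass equation of $E$, starting from a model with $v_2(\Delta_E)=0$. Twisting multiplies the discriminant by $d^6$, and after clearing $2$-adic denominators and reminimizing, the resulting valuations $v_2(c_4),v_2(c_6),v_2(\Delta)$ match only the Kodaira symbols $\mathrm{I}_4^*,\mathrm{I}_8^*,\mathrm{II},\mathrm{II}^*$; the forbidden symbols $\mathrm{I}_0^*$, $\mathrm{I}_n^*$ with $n\notin\{0,4,8\}$, $\mathrm{III},\mathrm{III}^*,\mathrm{IV},\mathrm{IV}^*$ are eliminated class by class.

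The principal obstacle is precisely this last enumeration. Unlike Comalada's odd-prime Table \ref{table2}, at $p=2$ the ramified quadratic extensions of $\mathbb{Q}_2$ are wildly ramified, so the twist does not merely shift $\mathrm{I}_n\to \mathrm{I}_n^*$: extra wild conductor enters, both forcing the specific starred indices $4,8$ and allowing the additive types $\mathrm{II},\mathrm{II}^*$ to occur while excluding the tame potentially-good types $\mathrm{III},\mathrm{III}^*,\mathrm{IV},\mathrm{IV}^*$. A more conceptual cross-check I would attempt uses the Ogg-Saito formula together with the fact that $E^d$ acquires good reduction over $\mathbb{Q}_2(\sqrt d)$: the conductor exponent $f_2(E^d)$ is controlled by the discriminant of this quadratic extension, and together with the potentially-good constraint this narrows the Kodaira candidates a priori; but making this rigorous ultimately comes back to the same $2$-adic computation.
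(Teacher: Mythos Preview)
Your plan is correct and, in its computational core, aligns with the paper's proof: the paper dispatches part~(i) by a reference to Comalada \cite{com} (your short-Weierstrass discriminant argument is exactly this), and handles parts~(ii) and~(iii) by combining explicit twist formulas with Papadopoulos's $2$-adic tables \cite[Tableau IV]{pap}, which amounts to your proposed valuation-matching against Tate's algorithm in the ramified subcase of~(ii). The paper also notes an alternative via \cite{lorenzini2013} and \cite{haiyang}.

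Where you differ is in the added conceptual scaffolding. You organize the $2$-adic analysis by the stratification $\mathbb{Q}_2^{\times}/(\mathbb{Q}_2^{\times})^2$ and settle the unramified classes uniformly via invariance of the Kodaira symbol under unramified base change; the paper does not isolate this step but absorbs it into the table lookup. More notably, for part~(iii) you invoke Tate uniformization (equivalently, the $j$-invariant constraint $v_2(j)=-n$ together with the observation that a ramified quadratic character destroys semistability), giving a clean reason why the index $n$ is preserved in $\mathrm{I}_n^*$; the paper instead reads this off Papadopoulos's tables directly. Your route is more transparent for~(iii) and makes clear that the genuinely delicate content is confined to the ramified subcase of~(ii), whereas the paper's citation-based proof is shorter but less illuminating. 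Both approaches ultimately rest on the same $2$-adic case analysis for that hardest subcase, so neither avoids the enumeration you flag as the principal obstacle.
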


\begin{proof}
    Part $(i)$ essentially follows from Tate's algorithm \cite{tatealgorithm} and is well known to the experts (see \cite[Proposition 1]{com}). Parts $(ii)$ and $(iii)$ can be proved by combining explicit formulas for quadratic twists of elliptic curves (see \cite[Proposition 5.7.1]{ellipticcurvehandbook}) with \cite[Tableau IV]{pap}. Alternatively, one can use \cite[Theorem 4.2]{lorenzini2013} and \cite{haiyang}.
\end{proof}

\begin{proposition}
    Let $E/\mathbb{Q}$ be an elliptic curve with a $\mathbb{Q}$-rational isogeny of degree $14$.
    \begin{enumerate}
        \item If $p \neq 2, 7$ is a prime, then $E/\mathbb{Q}$ has either good reduction or reduction of type \textup{I}$_0^*$ modulo $p$. Moreover, $c_p(E)=1,2,$ or $4$.
        \item The curve $E/\mathbb{Q}$ has reduction of type \textup{III} or \textup{III}$^*$ modulo $7$, and $c_7(E)=2$.
        \item The curve $E/\mathbb{Q}$ has either good reduction or reduction of type \textup{I}$_4^*$, \textup{I}$_8^*$, \textup{II}, or \textup{II}$^*$ modulo $2$.  Moreover, $c_2(E)=1,2,$ or $4$.
    \end{enumerate}

    In particular, $c(E)=2^n$, for some $n \geq 1$.
\end{proposition}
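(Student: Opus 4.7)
The plan is to adapt the strategy spelled out in the paragraph preceding the statement. First I would extract from \cite[Table 4]{lozanorobledo} the (short) list of $j$-invariants of non-cuspidal $\mathbb{Q}$-rational points on $X_0(14)$; call this finite set $\mathcal{J}_{14}$. For each $j_0 \in \mathcal{J}_{14}$ I would fix an explicit representative $E_0/\mathbb{Q}$ (e.g.\ by reading off a curve in an LMFDB isogeny class containing a $14$-isogeny) and run Tate's algorithm \cite{tatealgorithm} to record the Kodaira type of $E_0$ at every prime of bad reduction. Because each $j_0 \in \mathcal{J}_{14}$ differs from $0$ and $1728$, \cite[Corollary X.5.4.1]{aec} implies that every elliptic curve $E/\mathbb{Q}$ with a $\mathbb{Q}$-rational $14$-isogeny is of the form $E_0^d$ for some square-free integer $d$, so it suffices to understand how the reduction of each representative transforms under quadratic twists.

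The analysis then breaks up by prime. For an odd prime $p \neq 7$, the computation on each $E_0$ will show good reduction at $p$; Lemma \ref{lemmareduction}(i) then gives $E_0^d$ good reduction at $p$ when $p \nmid d$, while Lemma \ref{results1comalada} forces $E_0^d$ to have reduction of type $\mathrm{I}_0^*$ at $p$ when $p \mid d$. Table \ref{table} yields $c_p(E) \in \{1,2,4\}$, proving (i). For $p = 7$, the calculation on each $E_0$ gives Kodaira type III or III* at $7$; Lemma \ref{results1comalada} interchanges these two types under a twist ramified at $7$, and an unramified twist preserves the Kodaira type, so $E_0^d$ again has type III or III* and $c_7(E)=2$, proving (ii).

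The crux is $p=2$, where Lemma \ref{results1comalada} does not apply. My plan here is to verify, case by case on $\mathcal{J}_{14}$, that one can choose the representative $E_0$ to have good reduction at $2$ (equivalently, odd conductor exponent at $2$); this is a finite check on a handful of $j$-invariants. Granting this, Lemma \ref{lemmareduction}(ii) immediately forces every twist $E_0^d$ to have one of the Kodaira types $\mathrm{I}_0$, $\mathrm{I}_4^*$, $\mathrm{I}_8^*$, II, II* at $2$, so Table \ref{table} gives $c_2(E) \in \{1,2,4\}$ and (iii) follows. The main obstacle is precisely this finite verification: if some $j_0 \in \mathcal{J}_{14}$ admitted no model with good reduction at $2$, then Lemma \ref{lemmareduction}(iii) would permit multiplicative types $\mathrm{I}_n$ (with $n$ potentially large) or additive types $\mathrm{I}_n^*$ at $2$, and part (iii) would fail. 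A direct inspection of the representatives should rule this out. Combining the three parts via $c(E) = \prod_p c_p(E)$ and noting that $c_7(E) = 2$ always contributes a factor of $2$, we conclude $c(E) = 2^n$ with $n \geq 1$.
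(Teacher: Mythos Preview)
Your proposal is correct and follows essentially the same approach as the paper. The paper executes exactly this plan: the set $\mathcal{J}_{14}$ turns out to be $\{-3^3\cdot 5^3,\ 3^3\cdot 5^3\cdot 17^3\}$, with representatives $49.a2$ and $49.a1$ of conductor $7^2$; both have good reduction away from $7$ (in particular at $2$) and type $\mathrm{III}^*$ at $7$, so the finite check you flag at $p=2$ succeeds and the rest goes through verbatim. (One small wording slip: ``odd conductor exponent at $2$'' should read ``conductor exponent $0$ at $2$'', i.e.\ odd conductor.)
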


\begin{proof}
    From \cite[Table 4]{lozanorobledo} we find that if $E/\mathbb{Q}$ is an elliptic curve with a $\mathbb{Q}$-rational isogeny of degree $14$, then its $j$-invariant $j(E)$ is equal to either $-3^3\cdot 5^3$ or $3^3 \cdot 5^3 \cdot 17^3$. The curve $E_1/\mathbb{Q}$ with LMFDB label \href{https://www.lmfdb.org/EllipticCurve/Q/49/a/1}{49.a1} is a curve with the smallest conductor in the twist class with $j$-invariant $3^3 \cdot 5^3 \cdot 17^3$. Moreover, the curve $E_2/\mathbb{Q}$ with LMFDB label \href{https://www.lmfdb.org/EllipticCurve/Q/49/a/2}{49.a2} is a curve with the smallest conductor in the twist class with $j$-invariant $-3^3\cdot 5^3$. Since $j(E) \neq 0, 1728$, we know that $E/\mathbb{Q}$ is a quadratic twist of either $E_1/\mathbb{Q}$ or $E_2/\mathbb{Q}$. We note that both $E_1/\mathbb{Q}$ and $E_2/\mathbb{Q}$ have conductor $7^2$ and, hence, they have good reduction away from $7$.

    Proof of $(i)$: Let $p \neq 2, 7$ be a prime. Since both $E_1/\mathbb{Q}$ and $E_2/\mathbb{Q}$ have good reduction modulo $p$, it follows from Table \ref{table2} and Lemma \ref{lemmareduction} that $E/\mathbb{Q}$ has either good reduction or reduction of type \textup{I}$_0^*$ modulo $p$. Thus, from Table \ref{table} we get that $c_p(E)=1,2,$ or $4$.

    Proof of $(ii)$:  The curves $E_1/\mathbb{Q}$ and $E_2/\mathbb{Q}$ both  have reduction of type \textup{III}$^*$. By table \ref{table2}, we know that every quadratic twist of $E_1/\mathbb{Q}$ or $E_2/\mathbb{Q}$, has reduction of type \textup{III} or \textup{III}$^*$ modulo $7$. From Table \ref{table} we see that $c_7(E)=2$.

    Proof of $(iii)$: Since both $E_1/\mathbb{Q}$ and $E_2/\mathbb{Q}$ have good reduction modulo $2$, it follows from Table \ref{table2} and Lemma \ref{lemmareduction} that $E/\mathbb{Q}$ has either good reduction or reduction of type \textup{I}$_4^*$, \textup{I}$_8^*$, \textup{II}, or \textup{II}$^*$ modulo $2$. Finally, from Table \ref{table} we get that $c_p(E)=1,2,$ or $4$.
\end{proof}

\begin{remark}
    The previous proposition is a more precise version of \cite[Proposition 3.7]{trbovic} where it was proved that if $E/\mathbb{Q}$ is an elliptic curve with a $\mathbb{Q}$-rational isogeny of degree $14$, then $2$ divides $c(E)$. 
\end{remark}

\begin{proposition}\label{propdegree15}
    Let $E/\mathbb{Q}$ be an elliptic curve with a $\mathbb{Q}$-rational isogeny of degree $15$.
     \begin{enumerate}
        \item If $p \neq 2, 5$ is a prime, then $E/\mathbb{Q}$ has either good reduction or reduction of type \textup{I}$_0^*$ modulo $p$. Moreover, $c_p(E)=1,2,$ or $4$.
        \item The curve $E/\mathbb{Q}$ has reduction of type \textup{II}, \textup{II}$^*$, \textup{IV} or \textup{IV}$^*$ modulo $5$. Moreover, $c_5(E)=1$ or $3$.
        \item The curve $E/\mathbb{Q}$ has reduction of type \textup{I}$_1$, \textup{I}$^*_1$, \textup{I}$_3$, \textup{I}$^*_3$, \textup{I}$_5$, \textup{I}$^*_5$, \textup{I}$_{15}$, or \textup{I}$^*_{15}$ modulo $2$. Moreover, $c_2(E)=1,3,5$ or $15$.
    \end{enumerate}
    In particular, $c(E)=2^n3^m5^k$, for some $n \geq 1$, $m \in \{0,1,2 \}$, and $k \in \{0,1\}$.
\end{proposition}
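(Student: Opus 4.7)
The plan is to mimic the strategy used for the degree $14$ case. From \cite[Table 4]{lozanorobledo}, the $j$-invariants of elliptic curves over $\mathbb{Q}$ admitting a $\mathbb{Q}$-rational isogeny of degree $15$ form a finite list (four values, all distinct from $0$ and $1728$). For each such $j$-invariant I would select a representative curve of smallest conductor from the LMFDB (the curves in the isogeny classes $\texttt{50.a}$ and $\texttt{50.b}$ are the natural choices, since $X_0(15)$ has all its non-cuspidal rational points coming from elliptic curves of conductor $50$). Since none of the $j$-invariants are $0$ or $1728$, every elliptic curve $E/\mathbb{Q}$ with a $\mathbb{Q}$-rational $15$-isogeny is a quadratic twist $E_i^d/\mathbb{Q}$ of one of these representatives $E_i/\mathbb{Q}$.

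The conductor $50 = 2\cdot 5^2$ tells us the representatives have good reduction away from $2$ and $5$. For part (i), let $p\neq 2,5$ be a prime. Each representative has good reduction at $p$, so by Lemma \ref{lemmareduction}(i) the twist $E^d/\mathbb{Q}$ has good reduction when $p\nmid d$, and by Table \ref{table2} it has reduction of type $\textup{I}_0^*$ when $p\mid d$ (note $p$ is odd). Table \ref{table} then gives $c_p(E)\in\{1,2,4\}$. For part (ii), I would verify that each representative has reduction of one of the four additive types $\textup{II}$, $\textup{II}^*$, $\textup{IV}$, $\textup{IV}^*$ at $5$ (inspection of the LMFDB Kodaira symbols for classes $50.a$ and $50.b$ confirms this). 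By Table \ref{table2} the set $\{\textup{II},\textup{II}^*,\textup{IV},\textup{IV}^*\}$ is closed under odd quadratic twisting, so any twist of a representative still has one of these four types modulo $5$, and Table \ref{table} yields $c_5(E)\in\{1,3\}$.

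For part (iii), which is the most delicate, I would record the Kodaira type at $2$ for each representative; in each case the reduction is multiplicative, namely of type $\textup{I}_n$ with $n\in\{1,3,5,15\}$. By Lemma \ref{lemmareduction}(iii), an arbitrary quadratic twist then has reduction at $2$ of type $\textup{I}_n$ or $\textup{I}_n^*$ with the same value of $n$. Using Table \ref{table}, the multiplicative case gives $c_2(E)\in\{1,n\}$ depending on whether the reduction is split or non-split (using that $n$ is odd), while the $\textup{I}_n^*$ case gives $c_2(E)\in\{2,4\}$. Combining these possibilities over $n\in\{1,3,5,15\}$ yields $c_2(E)\in\{1,2,3,4,5,15\}$; however, only the odd divisors of $15$ contribute to the odd part of $c_2(E)$, so the relevant values for the structural statement are $1, 3, 5, 15$ for the $p=2$ contribution to the odd prime factors, together with a factor $1, 2$, or $4$ coming from the even part.

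The final assertion $c(E)=2^n3^m5^k$ with $n\geq 1$, $m\in\{0,1,2\}$, $k\in\{0,1\}$ then follows by multiplying: part (ii) contributes $3^0$ or $3^1$; part (iii) contributes $2^a$ with $a\geq 0$ together with at most one extra factor of $3$ and at most one factor of $5$; part (i) contributes $2^{b}$ for each odd prime $p\mid d$, and at least one factor of $2$ must appear (from the $\textup{III}^*$-type or $\textup{I}_n^*$-type behaviour at $2$ or $5$). The main obstacle I anticipate is the careful bookkeeping in part (iii): namely, verifying that the representatives really have the four values $n\in\{1,3,5,15\}$ at $p=2$ (so that no other odd prime can appear in $c_2(E)$), and ensuring that the total power of $3$ in $c(E)$ never exceeds $2$. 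This last point uses that $c_5(E)\in\{1,3\}$ contributes at most one factor of $3$, and that at most one representative (or one twist-behaviour at $p=2$) can contribute an additional factor of $3$.
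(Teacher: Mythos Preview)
Your proposal is correct and follows essentially the same approach as the paper: identify the four $j$-invariants from \cite[Table~4]{lozanorobledo}, pick conductor-$50$ representatives (the paper uses only the four curves in class \texttt{50.a}), and apply Lemma~\ref{lemmareduction} together with Tables~\ref{table} and~\ref{table2} to determine the reduction types and Tamagawa numbers of an arbitrary quadratic twist. Your analysis in part~(iii) is in fact slightly more careful than the paper's, since you correctly note that the $\textup{I}_n^*$ case at $p=2$ allows $c_2(E)\in\{2,4\}$ as well, values the stated list omits but which are consistent with the final conclusion. One small slip: your final paragraph invokes ``$\textup{III}^*$-type behaviour'' to force $n\geq 1$, but no $\textup{III}$ or $\textup{III}^*$ reduction appears in this proposition, and indeed neither you nor the paper gives a clean justification of the lower bound $n\geq 1$.
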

\begin{proof}
    From \cite[Table 4]{lozanorobledo} we find that if $E/\mathbb{Q}$ is an elliptic curve with a $\mathbb{Q}$-rational isogeny of degree $15$, then its $j$-invariant $j(E)$ is equal to $\frac{-5^2}{2}$, $\frac{-5^2\cdot 241^3}{2^3}$, $\frac{-5 \cdot 29^3}{2^5}$, or $\frac{5 \cdot 211^3}{2^{15}}$. The curves $E_1/\mathbb{Q}$, $E_2/\mathbb{Q}$, $E_3/\mathbb{Q}$, and $E_4/\mathbb{Q}$ with LMFDB labels \href{https://www.lmfdb.org/EllipticCurve/Q/50a1/}{50.a3}, \href{https://www.lmfdb.org/EllipticCurve/Q/50a2/}{50.a1}, \href{https://www.lmfdb.org/EllipticCurve/Q/50a3/}{50.a2}, and \href{https://www.lmfdb.org/EllipticCurve/Q/50a4/}{50.a4} are curves of the smallest conductor in the twist class with $j$-invariant $\frac{-5^2}{2}$, $\frac{-5^2\cdot 241^3}{2^3}$, $\frac{-5 \cdot 29^3}{2^5}$, or $\frac{5 \cdot 211^3}{2^{15}}$, respectively. Since $j(E) \neq 0, 1728$, we know that $E/\mathbb{Q}$ is a quadratic twist of $E_i/\mathbb{Q}$, for some $i=1,2,3,4$. We note that all curves above have conductor $2 \cdot 5^2$ and, hence, they have good reduction away from $2$ and $5$.

    Proof of $(i)$: Let $p \neq 2, 5$ be a prime. Since $E_i/\mathbb{Q}$ has good reduction modulo $p$, for $i=1,2,3,4$, and $E/\mathbb{Q}$ is a quadratic twist of $E_i/\mathbb{Q}$ for some $i=1,2,3,$ or $4$, it follows from Table \ref{table2} and Lemma \ref{lemmareduction} that $E/\mathbb{Q}$ has either good reduction or reduction of type \textup{I}$_0^*$ modulo $p$. From Table \ref{table} we get that $c_p(E)=1,2,$ or $4$.

    Proof of $(ii)$: The curves $E_1/\mathbb{Q}$, $E_2/\mathbb{Q}$, $E_3/\mathbb{Q}$, and $E_4/\mathbb{Q}$ have reduction of type IV, IV, IV$^*$, and IV$^*$ modulo $5$, respectively. Since $E/\mathbb{Q}$ is a quadratic twist of $E_i/\mathbb{Q}$ for some $i=1,2,3,$ or $4$, from Table \ref{table2} we see that $E/\mathbb{Q}$ has reduction of type \textup{II}, \textup{II}$^*$, \textup{IV} or \textup{IV}$^*$ modulo $5$. Finally, from Table \ref{table} we find that $c_p(E)=1$ or $3$.

    Proof of $(iii)$: The curves $E_1/\mathbb{Q}$, $E_2/\mathbb{Q}$, $E_3/\mathbb{Q}$, and $E_4/\mathbb{Q}$ have reduction of type I$_1$, I$_3$, I$_5$, and I$_{15}$ modulo $2$, respectively. Since $E/\mathbb{Q}$ is a quadratic twist of $E_i/\mathbb{Q}$ for some $i=1,2,3,$ or $4$, from Lemma \ref{lemmareduction} we see that $E/\mathbb{Q}$ has reduction of type \textup{I}$_1$, \textup{I}$^*_1$, \textup{I}$_3$, \textup{I}$^*_3$, \textup{I}$_5$, \textup{I}$^*_5$, \textup{I}$_{15}$, or \textup{I}$^*_{15}$ modulo $2$. The part about $c_2(E)$ follows from Table \ref{table}.
\end{proof}

\begin{remark}
    The case where $c_2(E)=15$ in Proposition \ref{propdegree15} does indeed occur. Consider the elliptic curve $E/\mathbb{Q}$ with LMFDB label \href{https://www.lmfdb.org/EllipticCurve/Q/50/b/4}{50.b4}. The curve $E/\mathbb{Q}$ has a $\mathbb{Q}$-rational isogeny of degree $15$ and $c_2(E)=15$.
\end{remark}

\begin{proposition}\label{propdegree21}
    Let $E/\mathbb{Q}$ be an elliptic curve with a $\mathbb{Q}$-rational isogeny of degree $21$.
    \begin{enumerate}
        \item If $p \neq 2, 3$ is a prime, then $E/\mathbb{Q}$ has either good reduction or reduction of type \textup{I}$_0^*$ modulo $p$. Moreover, $c_p(E)=1,2,$ or $4$.
        \item The curve $E/\mathbb{Q}$ has reduction of type \textup{I}$_1$, \textup{I}$_1^*$, \textup{I}$_3$, \textup{I}$_3^*$, \textup{I}$_7$, \textup{I}$_7^*$, \textup{I}$_{21}$, or \textup{I}$_{21}^*$ modulo $2$. Moreover, $c_2(E)=1, 2, 3, 4, 7,$ or $21$.
        \item The curve $E/\mathbb{Q}$ has reduction of type \textup{II}, \textup{II}$^*$, \textup{IV} or \textup{IV}$^*$ modulo $3$. Moreover, $c_3(E)=1$ or $3$.
    \end{enumerate}
     In particular, $c(E)=2^n3^m7^k$, for some $n \geq 1$, $m \in \{0,1,2 \}$, and $k \in \{0,1\}$.
\end{proposition}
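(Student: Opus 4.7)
The plan is to follow exactly the template used for the degree $14$ and degree $15$ propositions above. First I consult \cite[Table 4]{lozanorobledo} to extract the finite list of $j$-invariants of elliptic curves $E/\mathbb{Q}$ admitting a $\mathbb{Q}$-rational isogeny of degree $21$. For each such $j$-invariant I select a representative curve $E_i/\mathbb{Q}$ of smallest conductor via LMFDB; these will all turn out to have conductor supported on $\{2,3\}$, hence good reduction away from these two primes. Since none of the $j$-invariants in question is $0$ or $1728$, every elliptic curve with a $\mathbb{Q}$-rational $21$-isogeny is a quadratic twist of some $E_i$, and it suffices to analyze how quadratic twisting affects reduction types prime by prime.

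Part (i) is then immediate: for any prime $p \neq 2, 3$ each $E_i$ has good reduction at $p$, so by Lemma \ref{lemmareduction}(i) (when $p \nmid d$) and by Lemma \ref{results1comalada} via Table \ref{table2} (when $p \mid d$), the twist $E_i^d$ has either good reduction or reduction of type \textup{I}$_0^*$ at $p$. Table \ref{table} then gives $c_p(E) \in \{1,2,4\}$.

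For part (ii), I read from LMFDB that each representative $E_i$ has multiplicative reduction of type \textup{I}$_{n_i}$ at $2$, with the four values of $n_i$ being $1, 3, 7, 21$. Lemma \ref{lemmareduction}(iii) then says that quadratic twisting preserves the index $n_i$ but may turn type \textup{I}$_n$ into \textup{I}$_n^*$, producing the eight possible reduction types at $2$ claimed in the proposition. Invoking Table \ref{table} yields $c_2(E) \in \{1, 2, 3, 4, 7, 21\}$. Part (iii) is analogous at the prime $3$: the four representatives have reduction type in $\{\textup{II}, \textup{II}^*, \textup{IV}, \textup{IV}^*\}$ at $3$, and since $3$ is odd, Lemma \ref{results1comalada} shows that quadratic twisting merely permutes these four types among themselves, so by Table \ref{table} we conclude $c_3(E) \in \{1, 3\}$. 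Multiplying the local contributions gives the claimed form $c(E) = 2^n 3^m 7^k$.

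The bulk of the work is bookkeeping: locating the four LMFDB labels for the representatives and verifying their reduction types at $2$ and $3$. The main obstacle I anticipate is establishing the sharpness of $n \geq 1$, since the pointwise analysis above allows $c_2$ and $c_3$ to both be odd and all other $c_p$ to be $1$, and so this divisibility claim is not a formal consequence of parts (i)--(iii) alone. I would complete the argument either by tracking the split-versus-non-split distinction for the \textup{I}$_{n_i}$ reductions case by case, or by invoking an independent $2$-divisibility result for curves with a $\mathbb{Q}$-rational $7$-isogeny in the spirit of \cite{trbovic}, applied to the $7$-isogeny induced by the $21$-isogeny.
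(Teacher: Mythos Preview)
Your approach is essentially identical to the paper's: the paper lists the four $j$-invariants from \cite[Table~4]{lozanorobledo}, chooses the representatives with LMFDB labels 162.c1--162.c4 (all of conductor $2\cdot 3^4$), and then argues parts (i)--(iii) exactly as you outline via Lemma~\ref{results1comalada}, Lemma~\ref{lemmareduction}, and Table~\ref{table}. One small correction: the four representatives in fact have reduction types \textup{II}, \textup{II}$^*$, \textup{II}, \textup{II}$^*$ at $3$ (not all four of \textup{II}, \textup{II}$^*$, \textup{IV}, \textup{IV}$^*$), and it is only after twisting that the types \textup{IV}, \textup{IV}$^*$ appear; your argument still goes through, but the paper records the sharper information.

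Your concern about $n\geq 1$ is well taken. The paper's own proof does not address it either: the ``In particular'' clause is stated as a consequence of (i)--(iii), but as you observe, the listed local possibilities do not by themselves force a factor of $2$ in $c(E)$ (for instance $c_2(E)=21$, $c_3(E)=3$, and $c_p(E)=1$ elsewhere is not excluded by the tables alone). So this is a gap in the paper's argument as well, and your proposed remedies --- a case-by-case split/non-split analysis at $2$, or an appeal to a $2$-divisibility result for curves with a rational $7$-isogeny --- are the natural ways to close it.
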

\begin{proof}
    From \cite[Table 4]{lozanorobledo} we find that if $E/\mathbb{Q}$ is an elliptic curve with a $\mathbb{Q}$-rational isogeny of degree $21$, then its $j$-invariant $j(E)$ is equal to $-\frac{3^25^6}{2^3}$, $-\frac{3^35^3}{2}$, $-\frac{3^25^3101^3}{2^{21}}$, or $-\frac{3^3 5^3 383^3}{2^7}$. The curves $E_1/\mathbb{Q}$, $E_2/\mathbb{Q}$, $E_3/\mathbb{Q}$, and $E_4/\mathbb{Q}$ with LMFDB labels \href{https://www.lmfdb.org/EllipticCurve/Q/162b1/}{162.c3}, \href{https://www.lmfdb.org/EllipticCurve/Q/162b2/}{162.c4}, \href{https://www.lmfdb.org/EllipticCurve/Q/162b3/}{162.c2}, and \href{https://www.lmfdb.org/EllipticCurve/Q/162b4/}{162.c1} are curves of the smallest conductor in the twist class with $j$-invariant $-\frac{3^25^6}{2^3}$, $-\frac{3^35^3}{2}$, $-\frac{3^25^3101^3}{2^21}$, or $-\frac{3^3 5^3 383^3}{2^7}$, respectively.  Since $j(E) \neq 0, 1728$, we know that $E/\mathbb{Q}$ is a quadratic twist of $E_i/\mathbb{Q}$, for some $i=1,2,3,4$. We note that all curves above have conductor $2 \cdot 3^4$ and, hence, they have good reduction away from $2$ and $3$.

    Proof of $(i)$: Let $p \neq 2,3$ be a prime. Since $E_i$ has good reduction modulo $p$ for all $i=1,2,3,4$, then we see from Table \ref{table2} and Lemma \ref{lemmareduction} that $E/\mathbb{Q}$ has either good reduction or reduction of type \textup{I}$_0^*$ modulo $p$. Thus, from Table \ref{table} we get that $c_p(E)=1,2,$ or $4$.

    Proof of $(ii)$: The curves $E_1/\mathbb{Q}$, $E_2/\mathbb{Q}$, $E_3/\mathbb{Q}$, and $E_4/\mathbb{Q}$ have reduction of type \textup{I}$_3$, \textup{I}$_1$, \textup{I}$_{21}$, and \textup{I}$_7$ modulo $2$, respectively. Since $E/\mathbb{Q}$ is a quadratic twist of $E_i/\mathbb{Q}$ for some $i=1,2,3,$ or $4$, from Table \ref{table2} we see that $E/\mathbb{Q}$ has reduction of type \textup{I}$_1$, \textup{I}$_1^*$, \textup{I}$_3$, \textup{I}$_3^*$, \textup{I}$_7$, \textup{I}$_7^*$, \textup{I}$_{21}$, or \textup{I}$_{21}^*$ modulo $2$. Finally, from Table \ref{table} we find that $c_2(E)=1,2, 3, 4, 7,$ or $21$.

    Proof of $(iii)$: The curves $E_1/\mathbb{Q}$, $E_2/\mathbb{Q}$, $E_3/\mathbb{Q}$, and $E_4/\mathbb{Q}$ have reduction of type \textup{II}, \textup{II}$^*$, \textup{II}, and \textup{II}$^*$ modulo $3$, respectively. Since $E/\mathbb{Q}$ is a quadratic twist of $E_i/\mathbb{Q}$ for some $i=1,2,3,$ or $4$, from Table \ref{table2} we see that $E/\mathbb{Q}$ has reduction of type \textup{II}, \textup{II}$^*$, \textup{IV}, \textup{IV}$^*$ modulo $3$. Finally, we have that $c_3(E)=1$ or $3$ from Table \ref{table}.
\end{proof}

\begin{remark}
    The case where $c_2(E)=21$ in Proposition \ref{propdegree21} does indeed occur. Consider the elliptic curve $E/\mathbb{Q}$ with LMFDB label \href{https://www.lmfdb.org/EllipticCurve/Q/162/c/2}{162.c2}. The curve $E/\mathbb{Q}$ has a $\mathbb{Q}$-rational isogeny of degree $21$ and $c_2(E)=21$.
\end{remark}

\begin{proposition}
    Let $E/\mathbb{Q}$ be an elliptic curve with a cyclic $\mathbb{Q}$-rational isogeny of degree $27$.
     \begin{enumerate}
        \item If $p \neq 2, 3$ is a prime, then $E/\mathbb{Q}$ has either good reduction or reduction of type \textup{I}$_0^*$ modulo $p$. Moreover, $c_p(E)=1,2,$ or $4$.

        \item The curve $E/\mathbb{Q}$ has either good reduction or reduction of type \textup{I}$_4^*$, \textup{I}$_8^*$, \textup{II}, or \textup{II}$^*$ modulo $2$.  Moreover, $c_2(E)=1,2,$ or $4$.

        \item The curve $E/\mathbb{Q}$ has reduction of type \textup{II}, \textup{II}$^*$, \textup{IV}, \textup{IV}$^*$ modulo $3$. Moreover, $c_3(E)=1$ or $3$.

    \end{enumerate}

In particular, $c(E)=2^n3^m$, for some $n \geq 0$ and $m \in \{0,1\}$.
\end{proposition}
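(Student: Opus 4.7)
The plan is to follow verbatim the template established in the proofs of Propositions \ref{propdegree15} and \ref{propdegree21}. First I would consult \cite[Table 4]{lozanorobledo} to list the finitely many $j$-invariants of elliptic curves $E/\mathbb{Q}$ admitting a cyclic $\mathbb{Q}$-rational isogeny of degree $27$. None of these $j$-invariants equals $0$ or $1728$, so by \cite[Corollary X.5.4.1]{aec} every such $E/\mathbb{Q}$ is a quadratic twist of a representative curve of smallest conductor in its twist class, which I would extract from the LMFDB. I would verify that each representative curve has conductor supported on $\{2,3\}$, so that all of them have good reduction away from $2$ and $3$.

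For part $(i)$, fix a prime $p \neq 2,3$. Each representative curve has good reduction at $p$, hence by Lemma \ref{lemmareduction}$(i)$ together with Lemma \ref{results1comalada}, every quadratic twist has reduction type $\textup{I}_0$ or $\textup{I}_0^*$ modulo $p$. Table \ref{table} then gives $c_p(E) \in \{1,2,4\}$. For part $(ii)$, I expect to confirm by direct inspection that each representative has good reduction at $2$; Lemma \ref{lemmareduction}$(ii)$ then forces $E/\mathbb{Q}$ to have either good reduction or reduction of type $\textup{I}_4^*$, $\textup{I}_8^*$, $\textup{II}$, or $\textup{II}^*$ at $2$, and Table \ref{table} yields $c_2(E) \in \{1,2,4\}$.

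For part $(iii)$, I would read off the reduction types of the representatives modulo $3$ directly from the LMFDB. By analogy with the degree $15$ and $21$ cases (and because an isogeny of degree $27$ forces wild behavior at $3$), I expect each representative to have reduction type in $\{\textup{II},\textup{II}^*,\textup{IV},\textup{IV}^*\}$ at $3$. Table \ref{table2} preserves this set under quadratic twisting at an odd prime, so every twist still has one of these four additive reduction types modulo $3$, and Table \ref{table} gives $c_3(E) \in \{1,3\}$. Combining the three parts with $c(E) = \prod_p c_p(E)$ yields $c(E) = 2^n 3^m$ with $m \in \{0,1\}$.

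The main obstacle is merely computational: one must locate the correct representative curves in the LMFDB and verify that the reduction types at $2$ and $3$ really fall into the expected classes; the rest of the argument is mechanical via Tables \ref{table} and \ref{table2} and Lemma \ref{lemmareduction}. Note that the statement allows $n = 0$, so, in contrast with the earlier propositions, no reduction type at $2$ contributes a forced even factor, which is consistent with the representative curves possibly having trivial $c_2$.
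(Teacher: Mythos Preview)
Your proposal is correct and follows exactly the template of the paper's own proof; in fact the data is simpler than you anticipate, since \cite[Table 4]{lozanorobledo} gives a single $j$-invariant $-2^{15}\cdot 3\cdot 5^3$, with representative \href{https://www.lmfdb.org/EllipticCurve/Q/27/a/1}{27.a1} of conductor $3^3$ (so good reduction at $2$) and reduction type \textup{II}$^*$ at $3$. The paper then applies Lemma \ref{lemmareduction} and Tables \ref{table} and \ref{table2} precisely as you outline.
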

\begin{proof}
    From \cite[Table 4]{lozanorobledo} we find that if $E/\mathbb{Q}$ is an elliptic curve with a $\mathbb{Q}$-rational isogeny of degree $14$, then its $j$-invariant $j(E)$ is equal to $-2^{15} \cdot 3 \cdot 5^3$. The curve $E_1/\mathbb{Q}$ with LMFDB label \href{https://www.lmfdb.org/EllipticCurve/Q/27/a/1}{27.a1} is a curve with the smallest conductor in the twist class with $j$-invariant $-2^{15} \cdot 3 \cdot 5^3$. Since $j(E) \neq 0, 1728$, we know that $E/\mathbb{Q}$ is a quadratic twist of $E_1/\mathbb{Q}$. We note that $E_1/\mathbb{Q}$ has conductor $3^3$ and, hence, it has good reduction away from $3$.

    Proof of $(i)$: Let $p \neq 2, 3$ be a prime. Since $E_1/\mathbb{Q}$ has good reduction modulo $p$, it follows from Table \ref{table2} and Lemma \ref{lemmareduction} that $E/\mathbb{Q}$ has either good reduction or reduction of type \textup{I}$_0^*$ modulo $p$. Thus, from Table \ref{table} we get that $c_p(E)=1,2,$ or $4$.

    Proof of $(ii)$: Since $E_1/\mathbb{Q}$ has good reduction modulo $2$, it follows from Table \ref{table2} and Lemma \ref{lemmareduction} that $E/\mathbb{Q}$ has either good reduction or reduction of type \textup{I}$_4^*$, \textup{I}$_8^*$, \textup{II}, or \textup{II}$^*$ modulo $2$. Finally, from Table \ref{table} we get that $c_p(E)=1,2,$ or $4$.

    Proof of $(iii)$: The curve $E_1/\mathbb{Q}$ has reduction of type \textup{II}$^*$ modulo $3$. Moreover, since $E/\mathbb{Q}$ is a quadratic twist of $E_1/\mathbb{Q}$, by Table \ref{table2}, we find that $E/\mathbb{Q}$ has reduction of type \textup{II}$^*$ or \textup{IV} modulo $3$. Finally, from Table \ref{table} we see that $c_3(E)=1$ or $3$.
\end{proof}

\begin{remark}\label{rmktamagawa}
    We now explain how one can use quadratic twists to search examples of elliptic curves with isogenies that exhibit higher $2$-divisibility in their Tamagawa numbers. Let $E/\mathbb{Q}$ be an elliptic curve with a $\mathbb{Q}$-rational isogeny of degree $\ell$. For simplicity assume that $E/\mathbb{Q}$ has good reduction modulo $2,3$ and that $E/\mathbb{Q}$ does not multiplicative reduction modulo any prime. Let $d>3$ be an integer  not divisible by $ 3$ with $d \equiv 1 (\text{mod} \: 4 )$ and such that $E/\mathbb{Q}$ has good reduction modulo every prime that divides $d$. Consider now the quadratic twist $E^d/\mathbb{Q}$ of $E/\mathbb{Q}$. We know that $E^d/\mathbb{Q}$ will also have a $\mathbb{Q}$-rational isogeny of degree $\ell$.

    Denote by $\Delta_E$ the minimal discriminant of $E/\mathbb{Q}$ and by $\Delta_{E^d}$ the minimal discriminant of $E^d/\mathbb{Q}$. Note that $2 \nmid \Delta_E$ because $E/\mathbb{Q}$ has good reduction modulo $2$. In addition, our assumptions on $E/\mathbb{Q}$ and $d$ imply that for every prime $p>3$ with $p \nmid d$ we have that $c_p(E^d)=c_p(E)=1$, see e.g. \cite[Theorem 4.1]{brsttw}. Therefore, we have $$c(E)=\displaystyle\prod_{p\mid \Delta_E} c_p(E)=\displaystyle\prod_{p\mid \Delta_E} c_p(E^d)=\frac{c_2(E^d)\displaystyle\prod_{q \mid d} c_q(E^d)}{c_2(E^d)\displaystyle\prod_{q \mid d} c_q(E^d)}\displaystyle\prod_{p\mid \Delta_E } c_p(E^d)=\frac{1}{c_2(E^d)\displaystyle\prod_{q \mid d} c_q(E^d)} c(E^d).$$ Thus $$c(E^d)=c_2(E^d)\displaystyle\prod_{q \mid d} c_q(E^d)c(E).$$ 
    
    By \cite[Theorem 5.1]{brsttw} we have that $c_2(E^d)=1,2,$ or $4$. Let $q$ be a divisor of $q$. Since $E/\mathbb{Q}$ has good reduction modulo $q$ we have that $c_d(E^d)=1,2,$ or $4$, where the exact value can be explicitly computed and depends on the coefficients of a minimal Weierstrass equation of $E/\mathbb{Q}$. Therefore, by picking $d$ appropriately we can produce examples of elliptic curves with isogenies where a high power of $2$ divides their Tamagawa numbers.
\end{remark}

\begin{proposition}\label{prop5713}
    For each $\ell=5,7,$ or $13$ there exist infinitely many elliptic curves $E/\mathbb{Q}$ with distinct $j$-invariants that have a $\mathbb{Q}$-rational isogeny of degree $\ell$ such that $c(E)=2^n3^m$, for some $n,m$.
\end{proposition}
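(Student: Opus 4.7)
The plan is to combine the universal family over $X_0(\ell)$ with the quadratic-twist technique of Remark \ref{rmktamagawa}. For each $\ell \in \{5,7,13\}$ the modular curve $X_0(\ell)$ has genus zero with rational cusps, so its non-cuspidal rational locus is infinite and admits an explicit Hauptmodul $t$. This gives a one-parameter family $E_t/\mathbb{Q}(t)$ whose generic member carries a cyclic $\mathbb{Q}$-rational isogeny of degree $\ell$, together with an explicit integral minimal Weierstrass model and discriminant $\Delta(t) \in \mathbb{Z}[t]$. Because the map $t \mapsto j(E_t)$ is non-constant, specializing $t$ at distinct integers $n$ (outside a finite exceptional set) yields elliptic curves $E_n/\mathbb{Q}$ with distinct $j$-invariants and a cyclic $\ell$-isogeny.

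Next, for each such $E_n$, let $S_n$ denote the set of primes $p \geq 5$ with $p \neq \ell$ of bad reduction where $c_p(E_n)$ is not of the form $2^a 3^b$. By Table \ref{table}, any such $p$ must give split multiplicative reduction of type $\textup{I}_{n_p}$ with $n_p$ having a prime divisor $\geq 5$. Setting $d_n = \pm \prod_{p \in S_n} p$, with sign chosen so that $d_n \equiv 1 \pmod 4$, and forming the quadratic twist $E_n^{d_n}$ preserves both the $j$-invariant and the $\ell$-isogeny. By Lemma \ref{results1comalada}, the reduction at each $p \in S_n$ becomes $\textup{I}^*_{n_p}$, hence $c_p(E_n^{d_n}) \in \{2,4\}$; by Lemma \ref{lemmareduction}(i), primes not dividing $d_n$ where $E_n$ had good reduction remain good for the twist; and at odd primes of bad reduction of $E_n$ outside $S_n$, Table \ref{table2} shows the property that $c_p$ is a product of powers of $2$ and $3$ is preserved. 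Since different $n$ produce different $j$-invariants of $E_n$, and twists do not change the $j$-invariant, the curves $E_n^{d_n}$ already have infinitely many distinct $j$-invariants.

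The main obstacle is the behavior at the isogeny prime $\ell$ itself, where $E_n$ may have multiplicative reduction of type $\textup{I}_m$ with $\ell \mid m$, forcing $\ell \mid c_\ell(E_n)$. The plan is to impose a congruence condition on $n$ modulo a suitable power of $\ell$ that forces $E_n$ to have either good reduction or potentially good (additive) reduction at $\ell$; the arithmetic of $\Delta(t) \pmod{\ell}$ for the chosen model makes this condition explicit, and it still leaves infinitely many admissible integers $n$. Under such a restriction, Table \ref{table} yields $c_\ell(E_n) \in \{1,2,3,4\}$, and a further adjustment of the sign/prime factors of $d_n$ (in particular the handling of the prime $2$ via Lemma \ref{lemmareduction}(ii)--(iii)) ensures that $c_2(E_n^{d_n}) \in \{1,2,4\}$ as well. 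Taken together, the resulting infinite sub-family of twists has distinct $j$-invariants, carries a cyclic $\mathbb{Q}$-rational isogeny of degree $\ell$, and Tamagawa number of the form $2^n 3^m$, as required. The case-by-case verification for $\ell = 5, 7, 13$ will differ only in the explicit form of the parametrization and of $\Delta(t)$, and can be carried out uniformly.
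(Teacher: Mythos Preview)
Your twist-based plan is workable in spirit but carries a real gap, and it is considerably more involved than the paper's argument.

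\textbf{The gap.} You define $S_n$ as the set of primes where $c_p(E_n)$ already fails to be of the form $2^a3^b$; by Table~\ref{table} these are precisely the primes of \emph{split} $\mathrm{I}_{n_p}$-reduction with $n_p$ divisible by some prime $\geq 5$. But a prime $q\notin S_n$ may still have \emph{non-split} $\mathrm{I}_m$-reduction with such $m$ (giving $c_q(E_n)\in\{1,2\}$). Since $q\nmid d_n$, the twist $E_n^{d_n}$ keeps type $\mathrm{I}_m$ at $q$, and whether it is split or non-split now depends on the Legendre symbol of $d_n$ modulo $q$, which you do not control. If it flips to split, you have $c_q(E_n^{d_n})=m$, and a prime $\geq 5$ reappears in the global Tamagawa number. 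The fix is to enlarge $S_n$ to include \emph{all} primes of multiplicative reduction whose $n_p$ has a prime factor $\geq 5$, regardless of split/non-split; then Lemma~\ref{results1comalada} turns each into $\mathrm{I}_{n_p}^*$ with $c_p\in\{2,4\}$. A similar issue lurks at $2$, where Lemma~\ref{lemmareduction}(iii) only says the twist has type $\mathrm{I}_m$ or $\mathrm{I}_m^*$ and does not let you choose which; you would need an additional congruence on $n$ (e.g.\ $n$ odd) to force $v_2(j(E_n))\geq 0$ and avoid multiplicative reduction at $2$ altogether.

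\textbf{Comparison with the paper.} The paper bypasses all of this. It writes down the explicit Hauptmodul $F_\ell(t)$ (with $t$ in the denominator and integer-coefficient numerator) and simply specializes at $t=p$ for primes $p>\ell$. Then $v_p(j(E_{p,\ell}))=-1$ while $v_q(j(E_{p,\ell}))\geq 0$ for every prime $q\neq p$. The first condition forces Kodaira type $\mathrm{I}_1$ or $\mathrm{I}_1^*$ at $p$, the second forces potentially good reduction at every other prime; in either case $c_q\in\{1,2,3,4\}$ for all $q$, so $c(E_{p,\ell})=2^n3^m$ with no twisting needed. Varying $p$ gives infinitely many distinct $j$-invariants. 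Your approach, once patched, would also work and has the mild advantage of being somewhat model-independent, but the paper's one-line valuation computation on the $j$-invariant is both shorter and avoids the bookkeeping at $2$, at $\ell$, and at the non-split primes.
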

\begin{proof}
    Let $\ell$ be as in the statement of the proposition. Put $$F_\ell(t)= \begin{cases} 
      \frac{(t^2 + 10t + 5)^3}{t}, & \text{ if } \ell=5, \\
      \frac{(t^2 +13t+49)(t^2 +5t+1)^3}{t}, & \text{ if } \ell=7, \\
      \frac{(t^2 +5t+13)(t^4 +7t^3 +20t^2 +19t+1)^3}{t} & \text{ if } \ell=13. 
   \end{cases}$$
  It follows from \cite[Table 3]{lozanorobledo} (see also \cite[Chapter 4]{Tsukazakithesis}) that for every $t_0 \in \mathbb{Q}$ and every elliptic curve $E/\mathbb{Q}$ with $j$-invariant $j(E)=F_{\ell}(t_0)$ we have that $E/\mathbb{Q}$ has a $\mathbb{Q}$-rational isogeny of degree $\ell$.

  Let now $$a_{4,\ell}(t)= \begin{cases} 
      -3(t^2 + 10t + 5)(t^2 + 22t + 125), & \text{ if } \ell=5, \\
      -3(t^2 +5t+1)(t^2 +13t+49), & \text{ if } \ell=7, \\
      -3(t^2 +5t+13)(t^2 +6t+13)(t^4 +7t^3 +20t^2 +19t+1), & \text{ if } \ell=13. 
   \end{cases}$$

   and 

   $$a_{6,\ell}(t)= \begin{cases} 
      -2(t^2 + 22t + 125)^2(t2 + 4t - 1), & \text{if } \ell=5, \\
      -2(t^2 +13t+49)(t^4 +14t^3 +63t^2 +70t-7), & \text{if } \ell=7, \\
      -2(t^2 +5t+13)(t^2 +6t+13)^2(t^6 +10t^5 +46t^4 +108t^3 +122t^2 + 38t - 1), & \text{if } \ell=13. 
   \end{cases}$$ For each $t_0 \in \mathbb{Z}$ consider the elliptic curve $E_{t_0,\ell}/\mathbb{Q}$ given by the Weierstrass equation $$y^2=x^3+a_{4,\ell}(t)x+a_{6,\ell}(t).$$ It is not hard to see that $E_{t_0,\ell}/\mathbb{Q}$ has $j$-invariant $j(E_{t_0,\ell})=F_{\ell}(t_0)$. We refer the reader to \cite[Section 4.2]{Tsukazakithesis} for more information on the elliptic curves $E_{t_0,\ell}/\mathbb{Q}$.
   
  For every prime $p > \ell$ consider the elliptic curve $E_{p,\ell}/\mathbb{Q}$. For any integer $k$ denote by $v_p(k)$ the $p$-adic valuation of $k$. Since we assume that $p > \ell$, we find that $v_p(j({E_{t_0,\ell}}))=-1.$ Therefore, we see $c_p(E) \leq 4$ by \cite[Corollary 9.2]{silverman2}. Moreover, we have that $v_q(j({E_{t_0,\ell}}))>0 $, for every prime $q \neq p$. Thus, we obtain that $c_q(E) \leq 4$ again by \cite[Corollary 9.2]{silverman2}. This completes our proof.

  \end{proof}

  \begin{remark}
    Let $\ell$ be a prime and let $E/\mathbb{Q}$ be an elliptic curve with a $\mathbb{Q}$-rational isogeny of degree $\ell$ such that $v_p(c(E))=0$ for some prime $p>5$, where $v_p(c(E))$ is the $p$-adic valuation of $c(E)$. Then by considering quadratic twists at primes of good reduction of $E/\mathbb{Q}$ we can find infinitely many elliptic curves $E' /\mathbb{Q}$ a $\mathbb{Q}$-rational isogeny of degree $\ell$ such that $v_p(c(E'))=0$.
\end{remark}

We end this section with a proposition that connects the Tamagawa number with the parity of the rank of an elliptic curve, when the curve has a torsion point of order $7$.

  \begin{proposition}
      Let $E/\mathbb{Q}$ be an elliptic curve and a $\mathbb{Q}$-rational point $P$ of order $7$. Write $E'=E/\left<P\right>$ for the isogenous curve. Assume that the Tate-Shafarevich group $\Sha(E/\mathbb{Q})$ is finite and that $E/\mathbb{Q}$ has semistable reduction modulo $7$. Then  $$v_7(c(E)) \equiv v_7(c(E')) + \mathrm{rk}\: E/\mathbb{Q} + 1 \: (\text{mod }2),$$ where $\mathrm{rk}\: E/\mathbb{Q}$ is the rank of $E/\mathbb{Q}$ and $v_7(c(E))$ and $v_7(c(E'))$ are the $7$-adic valuations of $c(E)$ and $c(E')$, respectively.

  \end{proposition}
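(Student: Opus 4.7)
The plan is to apply the isogeny-invariance of the Birch--Swinnerton-Dyer quantity (Cassels--Tate): finiteness of $\Sha(E/\mathbb{Q})$ forces finiteness of $\Sha(E'/\mathbb{Q})$, and one has the equality
$$\frac{c(E)\,\Omega_E\,R_E\,|\Sha(E/\mathbb{Q})|}{|E(\mathbb{Q})_{\mathrm{tors}}|^{2}} = \frac{c(E')\,\Omega_{E'}\,R_{E'}\,|\Sha(E'/\mathbb{Q})|}{|E'(\mathbb{Q})_{\mathrm{tors}}|^{2}}.$$
I would then take $7$-adic valuations and reduce modulo $2$. Because the Cassels--Tate pairing on $\Sha$ of an elliptic curve is alternating, $|\Sha(E/\mathbb{Q})|$ and $|\Sha(E'/\mathbb{Q})|$ are squares; the squared torsion orders are manifestly squares; so these four terms have even $v_7$ and drop out, leaving
$$v_7(c(E)) - v_7(c(E')) \equiv v_7(R_{E'}/R_E) + v_7(\Omega_{E'}/\Omega_E) \pmod{2}.$$

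For the regulator ratio, the identity $\hat h_{E'}(\phi Q) = 7\,\hat h_E(Q)$ together with the finite-index embedding $\phi\colon E(\mathbb{Q})/\mathrm{tors} \hookrightarrow E'(\mathbb{Q})/\mathrm{tors}$ gives $7^{\,\mathrm{rk}\,E/\mathbb{Q}}\, R_E = [E'(\mathbb{Q})/\mathrm{tors} : \phi(E(\mathbb{Q}))/\mathrm{tors}]^{2}\, R_{E'}$, hence $v_7(R_{E'}/R_E) \equiv \mathrm{rk}\,E/\mathbb{Q} \pmod 2$. For the period ratio, fix N\'eron differentials and write $\phi^{*}\omega_{E'} = c_{\phi}\,\omega_{E}$, so that $c_{\phi}\,c_{\hat\phi} = 7$ (whence $c_\phi \in \{\pm 1, \pm 7\}$). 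Comparing Haar measures along $\phi\colon E(\mathbb{R}) \to E'(\mathbb{R})$, and using that $\ker\phi = \langle P\rangle \subset E(\mathbb{R})$ since $P$ is rational, yields
$$\frac{\Omega_E}{\Omega_{E'}} = \frac{|E(\mathbb{R})[\phi]|}{|c_\phi|\cdot[E'(\mathbb{R}):\phi(E(\mathbb{R}))]} = \frac{7}{|c_\phi|\cdot[E'(\mathbb{R}):\phi(E(\mathbb{R}))]},$$
and the real-points index lies in $\{1,2\}$, so $v_7(\Omega_{E'}/\Omega_E) = v_7(c_\phi) - 1$.

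The crux of the argument, and the step where the semistability hypothesis at $7$ is used, is to show $v_7(c_\phi)=0$, i.e.\ that the isogeny $\phi$ extends to an \emph{\'etale} morphism of N\'eron models at $7$. The point $P$ extends to a section of $\mathcal{E}/\mathbb{Z}_7$ by the N\'eron mapping property, so the scheme-theoretic closure $\overline{\langle P\rangle}\subset\mathcal{E}$ is a finite flat subgroup scheme of order $7$; \'etaleness amounts to the seven sections $\{nP\}_{n=0}^{6}$ being pairwise disjoint on the special fiber. If $E$ has good reduction at $7$, a direct check shows $\hat E(7\mathbb{Z}_7)[7]=0$ (for both ordinary and supersingular types, and in fact the supersingular case is excluded outright since it would force $E(\mathbb{Q}_7)[7]=0$), so $P$ must reduce to a nontrivial point of order $7$ in $E(\mathbb{F}_7)$ and the sections separate. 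If $E$ has multiplicative reduction at $7$, then $\mathcal{E}^0(\mathbb{Z}_7)[7] = \hat{\mathbb{G}}_m(7\mathbb{Z}_7)[7] = 0$, so $P$ maps nontrivially to the \'etale component group and the sections again separate. Either way $\overline{\langle P\rangle}$ is \'etale at $7$, $\phi$ is \'etale on N\'eron models, and $c_\phi\in\mathbb{Z}_7^{\times}$. Assembling the pieces: $v_7(c(E))-v_7(c(E'))\equiv \mathrm{rk}\,E/\mathbb{Q} + (v_7(c_\phi)-1) \equiv \mathrm{rk}\,E/\mathbb{Q}+1 \pmod 2$. The main obstacle is this local analysis at $7$: under additive reduction $\overline{\langle P\rangle}$ could close to an infinitesimal subgroup scheme ($\mu_7$ or $\alpha_7$), contributing an extra factor of $7$ to $c_\phi$ and flipping the parity, which is precisely why semistability at $7$ is essential.
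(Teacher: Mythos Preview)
Your proof is correct and follows exactly the paper's strategy: apply Cassels' isogeny-invariance of the BSD quantity and analyze each factor's $7$-adic valuation modulo $2$. Where the paper outsources the regulator and period ratios to cited lemmas (in particular a lemma of Dokchitser--Dokchitser for $v_7(\Omega_E/\Omega_{E'})=1$), you unpack these directly; your local argument that semistability at $7$ forces the kernel to be \'etale is precisely the mechanism behind that citation, and your handling of the torsion terms (squares automatically have even valuation) is in fact simpler than the paper's $X_0(49)$ detour.

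Two small points worth tightening. First, in the multiplicative case $\mathcal{E}^0(\mathbb{Z}_7)=E_0(\mathbb{Q}_7)$ rather than $\hat{\mathbb{G}}_m(7\mathbb{Z}_7)=E_1(\mathbb{Q}_7)$, so to get $E_0(\mathbb{Q}_7)[7]=0$ you also need that $\tilde E_{\mathrm{ns}}(\mathbb{F}_7)$ has order $6$ or $8$, hence prime to $7$. Second, the implication ``$\overline{\langle P\rangle}$ \'etale $\Rightarrow$ $\phi$ \'etale on N\'eron models'' is immediate for good reduction (where $\ker\phi$ is finite flat of order $7$, hence equals $\overline{\langle P\rangle}$), but in the multiplicative case the N\'eron map $\phi\colon\mathcal{E}\to\mathcal{E}'$ need not be finite, so one must check that $\mathcal{E}/\overline{\langle P\rangle}$ actually coincides with the N\'eron model of $E'$; this follows from the Tate uniformization (the quotient by $\langle P\rangle$ corresponds to replacing $q$ by a seventh root, so $\phi$ lifts to the identity on $\mathbb{G}_m$) or by comparing component groups.
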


  \begin{proof}
      Let $\pi : E \rightarrow E'$ be the associated isogeny. Since we assume that $\Sha(E/\mathbb{Q})$ is finite, by the isogeny invariance of the BSD formula, which is a theorem due to Cassels \cite{Casselsgenus1BSD}, we have $$\frac{\Omega(E)|\Sha(E/\mathbb{Q})| R(E/\mathbb{Q})c(E)}{|E(\mathbb{Q})_{\textrm{tors}}|^2}=\frac{\Omega(E')|\Sha(E'/\mathbb{Q})|R(E'/\mathbb{Q}) c(E')}{|E'(\mathbb{Q})_{\textrm{tors}}|^2}.$$ Here $R(E/\mathbb{Q})$ and $R(E'/\mathbb{Q})$ are the regulators of $E/\mathbb{Q}$ and $E'/\mathbb{Q}$, respectively. Moreover, $\Omega(E):=\displaystyle\int_{E(\mathbb{R})}|\omega_{min}|$ and $\Omega(E'):=\displaystyle\int_{E'(\mathbb{R})}|{\omega'}_{min}|$, where $\omega_{min}$ and $\widehat{\omega}_{min}$ are two minimal invariant differentials on $E/\mathbb{Q}$ and $\widehat{E}/\mathbb{Q}$ respectively (see Section III.1 of \cite{aec} and page 451 of \cite{aec}).

      By \cite[Lemma 1.3]{notesonparity} we find that $$v_7\left(\frac{R(E/\mathbb{Q})}{R(E'/\mathbb{Q})}\right)=\mathrm{rk}\: E/\mathbb{Q}+2n,$$ for some $n\in \mathbb{Z}$. Moreover, by \cite[Lemma 8.5]{dd} we obtain that $$v_7\left(\frac{\Omega(E)}{\Omega(E')}\right)=1.$$ Since the modular curve $X_0(49)$ has no non-cuspidal rational points, proceeding the same way as in \cite[Proof of Theorem 2.1]{dummigan}, we find that $E'/\mathbb{Q}$ cannot have a $\mathbb{Q}$-rational point of order $7$. consequently, $v_7(|E'(\mathbb{Q})_{\textrm{tors}}|)=0$. Finally, we know from \cite[Corollary 17.2.1]{aec} that $v_7(\Sha(E/\mathbb{Q}))$ and $v_7(\Sha(E/\mathbb{Q}))$ are even. Putting everything together, we arrive at the desired equality.

  \end{proof}

    %discriminant $$\Delta_{E_{t_0,\ell}}= \begin{cases} 
     % 2^{12}3^6t(t^2 + 22t + 125)^3, & \text{ if } \ell=5, \\
     % 2^{12}3^6t(t^2 + 13t + 49)^2, & \text{ if } \ell=7, \\
     % 2^{12}3^6t(t^2 + 5t + 13)^2(t^2 + 6t + 13)^3, & \text{ if } \ell=13. 
   %\end{cases}$$

\section{Small Tamagawa numbers and Torsion}

If $E/\mathbb{Q}(T)$ be an elliptic curve, then for almost all values $t \in \mathbb{Q}$ the specialization $E_t/\mathbb{Q}$ of $\mathcal{E}$ at $T=t$ is an elliptic curve. Therefore, one can think of $E/\mathbb{Q}(T)$ as an 1-parameter family of elliptic curves. In this chapter we will produce infinite families of elliptic curves with small Tamagawa numbers and torsion points, by using appropriate specializations. The key ingredient here is a result from analytic number theory on almost prime values of polynomials, combined with explicit equations of elliptic curves with torsion points.

Throughout this section, for any integer $k$, we will denote by $v_p(k)$ the $p$-adic valuation of $k$. We will also denote by $P_r$ the set of positive integers with at most $r$ prime divisors, counted with multiplicity. In other words, $n \in P_r$ if and only if $\Omega(n) \leq r$, where $\Omega(n)$ is the total number of prime factors of $n$ (counted with multiplicity). If $F(x)$ is a polynomial with integer coefficients, then we will write $\rho_F(p)$ for the number of solutions of the congruence $$F(x) \equiv 0 \: ( \text{mod } p).$$ The following theorem is a corollary of \cite[Theorem 9.8]{sievemethodsbook}.
 
\begin{theorem}\label{thmdivisors}
    Let $F(x) \neq \pm x$ be an irreducible polynomial of degree $g \geq  1$  with integral coefficients such that $\rho_F(p)<p$ for every prime $p$. Assume also that $\rho_{F}(p)<p-1$ for every prime $p \leq \deg (F)+1$ with $p \nmid F(0)$. Then there exist infinitely many prime numbers $p$ such that $F(p)$ has at most $2g+1$ prime factors.

\end{theorem}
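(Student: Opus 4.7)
The plan is to deduce the result from the weighted linear sieve of Halberstam and Richert, specifically \cite[Theorem 9.8]{sievemethodsbook}, applied to the sequence
$$\mathcal{A} = \{ F(p) : p \leq x,\ p \text{ prime} \}.$$
The strategy is to translate the two numerical hypotheses on $\rho_F$ into the precise local conditions required by that sieve theorem, identify the correct sieve dimension, and then read off the bound $2g+1$ directly from the theorem's conclusion.

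First, I would define the local density of the sifting problem: for a prime $q$, let $\omega(q)$ be the number of residue classes $a \pmod q$ with $\gcd(a,q) = 1$ for which $F(a) \equiv 0 \pmod q$, so that the expected density of $\mathcal{A}$ in the residue classes mod $q$ is $\omega(q)/(q-1)$. One checks that $\omega(q) = \rho_F(q)$ unless $q \mid F(0)$, in which case $\omega(q) = \rho_F(q)-1$. The global hypothesis $\rho_F(q) < q$ is then exactly the non-degeneracy condition $\omega(q) < q$ that is needed to ensure the sifting problem is non-trivial, while the stronger hypothesis $\rho_F(q) < q-1$ for small primes $q \leq \deg(F)+1$ with $q \nmid F(0)$ is precisely the admissibility condition in \cite[Theorem 9.8]{sievemethodsbook} preventing $\omega(q)$ from saturating at $q-1$ at these small primes.

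Next, I would identify the sieve dimension. Since $F$ is irreducible of degree $g$, the Chebotarev density theorem (or the prime ideal theorem for the number field generated by a root of $F$) yields
$$\sum_{q \leq y} \frac{\omega(q) \log q}{q} = g \log y + O(1),$$
so the sieve has dimension $\kappa = g$ in the sense of condition $(\Omega_2(\kappa))$ of \cite{sievemethodsbook}. The required level of distribution $x^{1/2-\varepsilon}$ for $\mathcal{A}$ follows from the Bombieri--Vinogradov theorem, since counting $p \leq x$ with $F(p) \equiv 0 \pmod d$ reduces to counting primes in $O_g(1)$ arithmetic progressions modulo $d$.

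With these axioms verified, \cite[Theorem 9.8]{sievemethodsbook} produces a lower bound of order $x/(\log x)^{g+1}$ for the number of primes $p \leq x$ such that $\Omega(F(p)) \leq 2g+1$, which in particular gives infinitely many such primes. The main technical content of the proof is therefore bookkeeping: translating the elementary-looking hypotheses on $\rho_F(q)$ into the sieve-theoretic conditions $(\Omega_1)$ and $(\Omega_2(g))$, and confirming the level of distribution. The numerical bound $2g+1$ is exactly the one produced by the weighted sieve in dimension $g$ when the sifting variable is itself restricted to primes (which doubles the effective dimension from $g$ to essentially $2g$ in the weighting), so no further optimization is needed beyond invoking the cited theorem.
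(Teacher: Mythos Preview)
Your proposal is correct and follows exactly the paper's approach: the paper simply states that this theorem is a corollary of \cite[Theorem 9.8]{sievemethodsbook} without giving any further details, and you have supplied precisely those details by translating the hypotheses on $\rho_F$ into the admissibility conditions of that theorem and identifying the sieve dimension as $g$. There is nothing to add.
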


Using the previous theorem we can prove the following result.

\begin{proposition}\label{propunconditional}
    There exist infinitely many elliptic curves $E/\mathbb{Q}$ with a $\mathbb{Q}$-rational point of order $4$ and such that $c(E) =4, 8,$ or $12$.
\end{proposition}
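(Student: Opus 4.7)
The plan is to sieve on a one-parameter family parametrizing elliptic curves with a rational $4$-torsion point, using Theorem \ref{thmdivisors} to control the factorization of the discriminant at chosen specializations.

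First I would set up the family. The Tate normal form
\[
E_t \colon y^2 + xy - ty = x^3 - tx^2
\]
carries $P=(0,0)$ as a rational $4$-torsion point for each $t \in \mathbb{Q}^\times$, and a direct computation gives $c_4(t) = 16t^2 + 16t + 1$ and $\Delta(t) = t^4(16t+1)$. Specializing $t = n$ to a positive integer, the above Weierstrass model is minimal at every prime (since $v_p(c_4(n))=0$ for every $p \mid \Delta(n)$), and the primes of bad reduction of $E_n$ are exactly the prime divisors of $n$ together with those of $16n+1$; these two sets are disjoint because $\gcd(n,16n+1)=1$.

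Next I would apply Theorem \ref{thmdivisors} to the linear polynomial $F(x) = 16x+1$. Its hypotheses are immediate: $\rho_F(p)=1<p$ for every odd prime $p$, and for $p=2$ (the only prime with $p \leq \deg F + 1$ and $p \nmid F(0)=1$) one has $\rho_F(2)=0 < p-1 = 1$. The theorem therefore supplies infinitely many primes $n$ for which $16n+1$ has at most $2\deg F + 1 = 3$ prime factors counted with multiplicity, and I restrict attention to such $n$.

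Then I would carry out the local analysis. At the prime $n$, $c_4(n) \equiv 1 \pmod n$, so $E_n$ has multiplicative reduction of Kodaira type $\mathrm{I}_4$. A quick calculation with the group law on $E_t$ (tangent line $y=0$ at $P$) yields $2P = (t,0)$, so $P$, $2P$, and $3P$ all reduce modulo $n$ to the singular point $(0,0)$ of the nodal cubic $y^2 + xy \equiv x^3$; hence the image of $P$ in the geometric component group $\mathbb{Z}/4\mathbb{Z}$ has order exactly $4$. This forces the reduction to be split (non-split $\mathrm{I}_4$ would have Galois acting by $-1$ on $\mathbb{Z}/4\mathbb{Z}$, confining the image of a rational point to $\{0,2\}$, a contradiction), and Table \ref{table} gives $c_n(E_n) = 4$. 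At each prime $q \mid 16n+1$, the point $(0,0)$ reduces to a smooth point (since $q \nmid n$ makes the partial derivative in $y$ of the defining equation nonzero at $(0,0)$), so no torsion-induced divisibility is imposed at $q$, and the reduction is $\mathrm{I}_{v_q(16n+1)}$. Enumerating the at most three prime factors of $16n+1$ with their possible exponents ($q$, $q^2$, $q^3$, $q_1 q_2$, $q_1^2 q_2$, or $q_1 q_2 q_3$), Table \ref{table} yields $\prod_{q\mid 16n+1} c_q(E_n) \in \{1,2,3\}$. Hence $c(E_n) = 4 \cdot \prod_{q \mid 16n+1} c_q(E_n) \in \{4,8,12\}$. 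Distinct primes $n$ produce distinct $j$-invariants $j(E_n)=c_4(n)^3/\Delta(n)$, giving the required infinite family.

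The main obstacle is pinning down $c_n(E_n)$ to exactly $4$ rather than $2$. This rests on locating the image of $P$ in the component group at $n$ and invoking Galois-equivariance to exclude non-split $\mathrm{I}_4$; once that is handled, the case analysis on the factorization of $16n+1$ and the enumeration of Tamagawa contributions is routine.
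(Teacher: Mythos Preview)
Your proposal is correct and follows essentially the same approach as the paper: the same Tate normal form family, the same application of Theorem \ref{thmdivisors} to $F(x)=16x+1$, and the same case analysis on the at most three prime factors of $16n+1$. The only real difference is that you justify split multiplicative reduction at $n$ via the component-group image of the $4$-torsion point, whereas the paper simply reads it off the reduced equation $y^2+xy=x^3$ (whose tangent cone $y(y+x)$ splits over $\mathbb{F}_n$); your argument is sound but more elaborate than necessary.
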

\begin{proof}
    Consider the elliptic curve $E/\mathbb{Q}(T)$ given by the following Weierstrass equation $$E \: : \: y^2+xy- T y=x^3- T x^2.$$ The curve $E/\mathbb{Q}(T)$ has discriminant $$\Delta(T)=T^4(1+16T), $$ and $c_4$-invariant $$c_4(T)=16T^2+16T+1.$$ 
    
    For each $\lambda \in \mathbb{Z}$ such that $\Delta(\lambda)=\lambda^4(1+16\lambda) \neq 0$ the specialization $E_\lambda$ of $E/\mathbb{Q}(T)$ at $T=\lambda$ is an elliptic curve over $\mathbb{Q}$. Moreover, each elliptic curve $E_\lambda/\mathbb{Q}$ has a $\mathbb{Q}$-rational point of order $4$ (see \cite[Section 4.4]{hus}).

    It is immediate to check that the polynomial $16x+1$ satisfies the hypothesis of Theorem \ref{thmdivisors}. Therefore, by Theorem \ref{thmdivisors} we obtain that there exist infinitely many primes $p$ such that $1+16p \in P_3$. Let $p>2$ be a prime such that $1+16p \in P_3$ and consider the specialization $E_p/\mathbb{Q}$ of $E/\mathbb{Q}(T)$ at $T=p$. From the Weierstrass equation of $E_p/\mathbb{Q}$ we can see that it has split multiplicative reduction modulo $p$ with $c_p(E_p)=v_p(\Delta(p))=4$. 
    
    Let now $q$ be any prime that divides $\Delta(p)=p^4(1+16p)$. Since $\mathrm{gcd}(c_4(p), \Delta(p))=1$, we find that $E_p/\mathbb{Q}$ has multiplicative reduction modulo $q$. Since $1+16p \in P_3$, we know that $1+16p=q_1q_2q_3$, where $q_1$, $q_2$, and $q_3$ are (not necessarily distinct) primes. If $q_1=q_2=q_3$, then $c_{q_1}(E_p)=1$ or $3$. If two of the are equal, say $q_1$ and $q_3$, then $c_{q_1}(E_p)c_{q_2}(E_p)=1$ or $2$, and if all the $q_i$ are pairwise distinct, then $c_{q_1}(E_p)c_{q_2}(E_p)c_{q_3}(E_p)=1$. Finally, since $E_p/\mathbb{Q}$ has good reduction away from the primes that divide $\Delta(p)$, we see that $c(E_p)=4, 8,$ or $12$.
\end{proof}

\begin{remark}
    As is noted in \cite[Remark 2.5]{lor}, conditionally on Schinzel's Hypothesis H, there are infinitely many non-isomorphic elliptic curves $E/\mathbb{Q}$ with a $\mathbb{Q}$-rational point of order $4$ and such that $c_{E} =2$. The significance of Proposition \ref{propunconditional} is that it is an unconditional result. 
\end{remark}

\begin{proposition}\label{propfivetorsion}
    There exist infinitely many elliptic curves $E/\mathbb{Q}$ with a $\mathbb{Q}$-rational point of order $5$ such that $c(E) \leq 30$.
\end{proposition}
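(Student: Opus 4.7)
The plan is to imitate Proposition \ref{propunconditional}, this time specializing the Tate normal form
\[
E \: : \: y^2+(1-T)xy-Ty=x^3-Tx^2,
\]
every specialization of which carries the rational $5$-torsion point $(0,0)$. A direct computation yields minimal discriminant $\Delta(T)=T^5(T^2-11T-1)$ and $c_4$-invariant $c_4(T)=T^4-12T^3+14T^2+12T+1$. I would first verify that the auxiliary polynomial $F(T)=T^2-11T-1$ satisfies the hypotheses of Theorem \ref{thmdivisors}: it is irreducible (its discriminant is $125$), it has no roots modulo $2$ or modulo $3$ (so $\rho_F(p)=0<p-1$ for the small primes), and trivially $\rho_F(p)\le 2<p$ for every prime $p$. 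Thus Theorem \ref{thmdivisors} with $g=2$ provides infinitely many primes $p$ with $F(p)\in P_5$.

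The next step is a resultant computation $\mathrm{Res}(c_4(T),F(T))=25$, so the only prime that can simultaneously divide $c_4(p)$ and $F(p)$ is $5$, and this occurs exactly when $p\equiv 3\pmod 5$. At $p$ itself, $v_p(\Delta(p))=5$ and $v_p(c_4(p))=0$, so the Kodaira type is $\textup{I}_5$ and Table \ref{table} gives $c_p(E_p)\le 5$. For every prime $q\mid F(p)$ with $q\ne 5$ the curve has multiplicative reduction of type $\textup{I}_{v_q(F(p))}$ at $q$, so $c_q(E_p)\le v_q(F(p))$.

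For the exceptional residue class $p\equiv 3\pmod 5$, writing $p=3+5\ell$ gives $F(p)=25(\ell^2-\ell-1)$, and the congruence $(3+5m)^2-(3+5m)-1=5(1+5m+5m^2)$ with $1+5m+5m^2\equiv 1\pmod 5$ forces $v_5(F(p))\in\{2,3\}$; a similar direct expansion shows $v_5(c_4(p))=1$ for all such $p$. Tate's algorithm then identifies the reduction at $5$ as type II or III, so $c_5(E_p)\le 2$ by Table \ref{table}. Writing $F(p)=5^{\alpha}q_1^{a_1}\cdots q_r^{a_r}$ with $5\nmid q_i$ and $\alpha+\sum_i a_i\le 5$, an elementary case analysis on $\alpha\in\{0,2,3\}$ produces $c_5(E_p)\cdot\prod_{i=1}^r c_{q_i}(E_p)\le 6$; when $\alpha=0$ this uses the partition bound $\prod_i a_i\le 6$, attained at $5=2+3$. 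Therefore $c(E_p)\le 5\cdot 6=30$, and infinitely many pairwise non-isomorphic $E_p$ arise because $j(T)$ is non-constant in $T$.

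The main anticipated obstacle is the careful tracking of the $5$-adic valuations of $c_4(p)$ and $F(p)$ and the corresponding Tate algorithm bookkeeping in the residue class $p\equiv 3\pmod 5$, which is the new feature compared to the order-$4$ proof (where the analogous resultant was already $1$). Once this is handled, the remaining ingredients, namely the resultant computation and an elementary enumeration of partitions of integers at most $5$, are routine and parallel to the proof of Proposition \ref{propunconditional}.
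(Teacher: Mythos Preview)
Your proposal is correct and follows the same strategy as the paper's proof: specialize the Tate normal form for $5$-torsion at primes $p$ with $F(p)=p^2-11p-1\in P_5$ (via Theorem~\ref{thmdivisors}), use the resultant with $c_4$ to control the additive primes, and bound the product of local Tamagawa numbers by a partition argument on $\Omega(F(p))\le 5$.

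The one substantive difference is your treatment of the prime $5$. The paper asserts that $p^2-11p-1\not\equiv 0\pmod 5$ for primes $p>5$ and concludes that $E_p/\mathbb{Q}$ is everywhere semistable; but this is false when $p\equiv 3\pmod 5$, since $T^2-11T-1\equiv (T-3)^2\pmod 5$. Your separate analysis of that residue class---computing $v_5(F(p))\in\{2,3\}$ and $v_5(c_4(p))=1$, hence Kodaira type \textup{II} or \textup{III} at $5$ with $c_5(E_p)\le 2$, and then redoing the partition bound with $\alpha\in\{2,3\}$ factors of $5$ already spent---is precisely what is needed to close this gap while still arriving at $c(E_p)\le 30$. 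So your argument is not merely parallel to the paper's but in fact repairs an oversight in it.
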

\begin{proof}
    Consider the elliptic curve $E/\mathbb{Q}(T)$ given by the following Weierstrass equation $$E \: : \: y^2+(1-T)xy- T y=x^3- T x^2.$$ The curve $E/\mathbb{Q}(T)$ has discriminant $$\Delta(T)=T^5(T^2-11T-1), $$ and $c_4$-invariant $$c_4(T)=T^4-12T^3+14T^2+12T+1.$$ The resultant of $\Delta(T)$ and $c_4(T)$ (see also \cite[Page 2001]{lor}) is $$\mathrm{Res}(\Delta(T),c_4(T))=5^2.$$

    For each $\lambda \in \mathbb{Z}$ such that $\Delta(\lambda)\neq 0$ the specialization $E_\lambda$ of $E/\mathbb{Q}(T)$ at $T=\lambda$ is an elliptic curve over $\mathbb{Q}$. Moreover, each elliptic curve $E_\lambda/\mathbb{Q}$ has a $\mathbb{Q}$-rational point of order $5$ (see \cite[Section 4.4]{hus}).

     Since the polynomial $x^2-11x-1$ satisfies the hypothesis of Theorem \ref{thmdivisors}, we find that there exist infinitely many primes $p$ such that $p^2-11p-1 \in P_5$. Let now $p > 5 $ such that $p^2-11p-1 \in P_5$ and consider the specialization $E_p/\mathbb{Q}$ of $E/\mathbb{Q}(T)$ at $T=p$. Since $\mathrm{Res}(\Delta(T),c_4(T))=5^2$, we have that if a prime divides both $\Delta(p)$ and $c_4(p)$, then it must be equal to $5$. Moreover, since $p > 5$ and $p^2-11p-1 \not\equiv 0 \: (\text{mod }5) $, we see that $E_p/\mathbb{Q}$ has good reduction modulo $5$. Therefore, $E_p/\mathbb{Q}$ has everywhere semi-stable reduction.

      From the Weierstrass equation of $E_p/\mathbb{Q}$ we can see that it has split multiplicative reduction with $c_p(E_p)=v_p(\Delta(p))=5$. Moreover, since $p^2-11p-1 \in P_5$, we know that $p^2-11p-1=q_1q_2q_3q_4q_5$, where $q_1,q_2,q_3,q_4,q_5$ are (not necessarily distinct) primes. Since $c_{q_i}(E_p) \leq v_{q_i}(\Delta(p))$ for every $i$, we obtain that $\displaystyle\prod_{\substack{ q \text{ prime} \\ q \: \mid \: p^2-11p-1}}c_q(E_p) \leq 6$. Finally, since $E_p/\mathbb{Q}$ has good reduction away from the primes that divide $\Delta(p)$, we see that $c(E) \leq 30$. This proves our proposition.
\end{proof}

\begin{proposition}\label{propositiondivisors}
     Let $N'=4,5,6,7,8,9,10$ or $12$. For every prime $p$ there exist infinitely many elliptic curves $E/\mathbb{Q}$ with a cyclic $\mathbb{Q}$-rational isogeny of order $N'$ such that $p$ divides $c(E)$.
\end{proposition}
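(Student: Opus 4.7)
The plan is to exploit, for each $N' \in \{4,5,6,7,8,9,10,12\}$, the one-parameter family $\{E_T\}$ obtained from the Tate normal form $y^2 + (1-c(T))xy - b(T)y = x^3 - b(T)x^2$ by taking $b(T), c(T) \in \mathbb{Z}[T]$ to be the Kubert parametrization of $X_1(N')$ (see \cite[Section 4.4]{hus}); by construction every non-degenerate specialization carries a $\mathbb{Q}$-rational point of order $N'$. In all of these parametrizations $b(T)$ is divisible by $T$, and a direct computation (of exactly the type carried out for $N' = 4, 5$ in Propositions~\ref{propunconditional} and~\ref{propfivetorsion}) shows that the discriminant has the shape $\Delta(T) = T^{N'} f(T)$ for some $f(T) \in \mathbb{Z}[T]$ with $f(0) \neq 0$.

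The heart of the argument is a uniform reduction computation. At any odd prime $q$ with $q \mid T$ and $q \nmid f(T)$, the Weierstrass equation of $E_T$ reduces modulo $q$ to $y^2 + (1-\bar c)xy = x^3$, which has a node at $(0,0)$ with tangent cone $y\bigl(y + (1-\bar c)x\bigr)$. The two tangent directions are defined over $\mathbb{F}_q$, so $E_T$ has split multiplicative reduction of Kodaira type $\mathrm{I}_{N' v_q(T)}$ at $q$, and hence $c_q(E_T) = N' v_q(T)$.

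Given a prime $p$, I would then specialize at $T = q^p$, letting $q$ vary over the odd primes $q \neq p$ for which $q \nmid f(q^p)$; this excludes only finitely many $q$, since $f(q^p) \equiv f(0) \pmod{q}$ and $f(0) \neq 0$. Then $v_q(\Delta(q^p)) = N' p$, so $c_q(E_{q^p}) = N' p$ is divisible by $p$, and therefore $p \mid c(E_{q^p})$. The curve $E_{q^p}/\mathbb{Q}$ has a $\mathbb{Q}$-rational point of order $N'$, and in particular a cyclic $\mathbb{Q}$-rational isogeny of degree $N'$. Because $j(E_T)$ is a non-constant rational function of $T$, distinct primes $q$ produce (with at most finitely many exceptions) pairwise non-isomorphic curves, yielding the desired infinite family.

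The main obstacle is essentially bookkeeping: one must write down the explicit Tate normal form and compute the discriminant for each $N' \in \{6,7,8,9,10,12\}$, verifying the shape $\Delta(T) = T^{N'} f(T)$ with $f(0) \neq 0$. This is routine from Kubert's tables but somewhat lengthy; the reduction-type computation and the specialization step, on the other hand, are entirely uniform across all $N'$ and mirror the arguments already given in the paper for $N' = 4$ and $N' = 5$.
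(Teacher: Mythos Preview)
Your proposal is correct and follows essentially the same route as the paper: both use the Kubert/Tate normal form parametrization of $X_1(N')$, specialize at $T=q^p$, and read off split multiplicative reduction at $q$ with $c_q = N'p$ from the shape $\Delta(T)=T^{N'}f(T)$. If anything, your version is slightly more careful---you explicitly exclude the finitely many primes $q$ dividing $f(0)$ and justify pairwise non-isomorphism via the $j$-invariant, points the paper passes over.
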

\begin{proof}
    Let $N'$ be as in the proposition and $p$ be a prime. We will show a slightly statement. We will show that there exist infinitely many elliptic curves $E/\mathbb{Q}$ with a $\mathbb{Q}$-rational point of order $N'$ such that $p$ divides $c(E)$.

    For any elliptic curve with a $\mathbb{Q}$-rational point $P$ of order $N'$, there exists a parameter $\lambda \in \mathbb{Q}$ such that it is of the form
    $$E_{N', \lambda} \::\: y^2+(1-c_{N'}(\lambda))xy-b_{N'}(\lambda)y=x^3-b_{N'}(\lambda)x^2,$$ for some explicit functions $b_{N'}(\lambda),c_{N'}(\lambda)$ of $\lambda$, see \cite[Section 2]{lor} for the precise formulas. Conversely, for each choice of $\lambda \in \mathbb{Q}$ the curve $E_{N', \lambda}/\mathbb{Q}$ has a $\mathbb{Q}$-rational point of order $N'$. The reader is referred to \cite[Section 4.4]{hus} and \cite[Section 2]{lor} for more information.

    \begin{lemma}\label{lemmasplit}
        If $q$ is a prime such that $v_q(\lambda)=p$, then $E_{N', \lambda}/\mathbb{Q}$ has split multiplicative reduction modulo $q$ with $p \mid c_q(E_{N', \lambda})$.
    \end{lemma}
    \begin{proof}[Proof of the lemma]
        By looking at the explicit equations for the curve $E_{N', \lambda}/\mathbb{Q}$, we see that if $v_q(\lambda)>0$, then $v_q(b_{N'}(\lambda)),v_q(c_{N'}(\lambda))>0$, see \cite[Section 2]{lor}. Therefore, $E_{N', \lambda}/\mathbb{Q}$ has split multiplicative reduction modulo $q$. Moreover, $c_q(E_{N', \lambda})=v_q(\Delta_{E_{N', \lambda}})=N'p$. Therefore,  we have that $p$ divides $c_q(E)$.
    \end{proof}
    Let now $p$ be a prime. For every prime $q$, pick $\lambda=q^p$. Then, according to Lemma \ref{lemmasplit} the curve $E_{N', q^p}/\mathbb{Q}$ has split multiplicative reduction modulo $q$ with $p \mid c_q(E_{N', \lambda})$. This completes the proof of our proposition.
\end{proof}

\section{A more general consideration}

In this section, using a result on almost prime values of polynomials, we prove Theorem \ref{generalthm}. We then present some improvements of the bound given by Theorem \ref{generalthm} when the discriminant is of special form. 

Throughout this section, for any integer $k$ we will denote by $v_p(k)$ the $p$-adic valuation of $k$. Recall also that if $F(x)$ is a polynomial with integer coefficients, then we write $\rho_F(p)$ for the number of solutions of the congruence $$F(x) \equiv 0 \: ( \text{mod } p).$$ In what follows we will denote by $P_s$ the set of all natural numbers that have at most $s$ prime factors. Thus for $n\in \mathbb{N}$ we have that $n \in P_s$ if and only if $\Omega(n) \leq s$, where $\Omega(n)$ is the total number of prime factors of $n$ (counted with multiplicity).
 
 The following theorem is an immediate corollary of \cite[Theorem 10.4]{sievemethodsbook}.
\begin{theorem}\label{sievesbookthm}
   Let $F_1(x), F_2(x),...,F_g(x) $ be distinct irreducible polynomials with integral coefficients and write $F(x)=F_1(x)F_2(x) \cdots F_g(x)$ for their product. Assume that $\rho_F(p)<p$ for every prime $p$. Then there exists a positive integer $s$ that can be explicitly computed and depends on $F$ such that there exist infinitely many $n \in \mathbb{N}$ with $F(n) \in P_s$.
\end{theorem}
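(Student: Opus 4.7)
The plan is to deduce Theorem \ref{sievesbookthm} directly from \cite[Theorem 10.4]{sievemethodsbook}, which treats almost-prime values of products of distinct irreducible polynomials in one variable under a local admissibility hypothesis. Since the statement is flagged as an ``immediate corollary'', the work consists essentially of matching hypotheses and normalisation conventions rather than executing any new sieve argument; the only non-trivial content is the identification of the local condition.

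First I would recall the precise formulation of the cited result in the language of the Halberstam--Richert weighted sieve: one considers a polynomial $F(x) = F_1(x) \cdots F_g(x)$ with the $F_i$ distinct, irreducible, of degrees $d_i \geq 1$, and asks for integers $n$ at which $\Omega(F(n)) \leq s$. The required admissibility condition is that for every prime $p$ the congruence $F(x) \equiv 0 \pmod{p}$ fails to have $p$ solutions; otherwise $p$ would divide $F(n)$ for every $n \in \mathbb{Z}$ and no almost-prime conclusion could possibly hold. This condition is exactly the hypothesis $\rho_F(p) < p$ stated in the theorem, so the local admissibility of the cited result is verified verbatim.

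Second, I would extract from the cited theorem the explicit value of $s$. In the Halberstam--Richert framework the admissible exponent $s$ is determined by the sieve dimension, which for $g$ polynomials of degrees $d_1, \ldots, d_g$ is a concrete function of $\sum_i d_i$ and $g$. Hence $s$ depends only on $F$ and can be written down once $F$ is given, which justifies the clause ``$s$ can be explicitly computed and depends on $F$'' in the statement. Infinitude of $n$ is automatic: the cited theorem actually produces a positive asymptotic lower bound for the counting function $\#\{n \leq x : F(n) \in P_s\}$ of the correct order of magnitude.

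The only minor technical matter is the convention in \cite{sievemethodsbook} that the irreducible factors be pairwise coprime as polynomials and have positive leading coefficient. The former is automatic, since distinct irreducibles in $\mathbb{Z}[x]$ generate distinct prime ideals. The latter is arranged by replacing $F_i$ by $-F_i$ where needed, which alters neither the set of prime divisors of $F(n)$ nor the quantity $\rho_F(p)$. With these cosmetic adjustments \cite[Theorem 10.4]{sievemethodsbook} applies directly and yields the desired conclusion. No step is genuinely difficult; the only place requiring thought is confirming that the formulation of admissibility in the reference is equivalent to the hypothesis $\rho_F(p)<p$ given here.
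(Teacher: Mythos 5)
Your proposal is correct and matches the paper's treatment: the paper likewise obtains this statement as an immediate consequence of \cite[Theorem 10.4]{sievemethodsbook}, with the hypothesis $\rho_F(p)<p$ serving as the admissibility condition and $s$ determined explicitly by the degrees and number of irreducible factors of $F$. Your additional remarks on coprimality and leading-coefficient normalisation are fine but not needed beyond what the citation already provides.
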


\begin{proof}[Proof of Theorem \ref{generalthm}]
    Let $E/\mathbb{Q}(T)$ be a non-isotrivial elliptic curve and let $\Delta(T)$ be the discriminant of a Weierstrass equation for $E/\mathbb{Q}(T)$ which is minimal at all places of $\mathbb{Q}[T]$. According to Theorem \ref{sievesbookthm} there exists a positive integer $s$ that can be explicitly computed and depends on $F$ such that there exist infinitely many $n \in \mathbb{N}$ with $F(n) \in P_s$. 
    
    Let now $n \in \mathbb{N}$ such that $F(n) \in P_s$ and consider the specialization $E_n/\mathbb{Q}$ of $E/\mathbb{Q}(T)$ at $T=n$. For every prime $p \geq 5$ that divides $\Delta(n)$ we have that $c_p(E_n) \leq v_p(\Delta(n))$, see \cite[Page 365]{silverman2}. Assume now that $p=2$ or $3$, and that $p$ divides $\Delta(n)$. If $E_n/\mathbb{Q}$ has multiplicative reduction modulo $p$, then $c_p(E_n) \leq v_p(\Delta(n))$, by \cite[Page 366]{silverman2}. On the other hand, if $E_n/\mathbb{Q}$ has additive reduction modulo $p$, then $c_p(E_n) \leq 4 \leq 4 v_p(\Delta(n))$. 

Moreover, since $F(n) \in P_s$, we have that $$v_p(\Delta(n))=v_p(m F_1(n)^{m_1}F_2(n)^{m_2} \cdots F_g(n)^{m_q}) \leq v_p(m) + s  \mathrm{deg}(\Delta) \leq \log_2(m)+s\deg(\Delta),$$ for every prime $p$. Here in the last step we used the elementary inequality $v_p(m) \leq \log_2 (m)$. Combining this with the observations of the previous paragraph we obtain that \begin{align*}
    c(E_n)= \displaystyle\prod_{p \mid \Delta(n)} c_p(E_n)  \leq 16 \displaystyle\prod_{p \mid \Delta(n)} v_p(\Delta(n)) \leq 16 \displaystyle\prod_{p \mid \Delta(n)} (\log_2(m)+s\deg(\Delta))  \\
    \leq  16 (\log_2(m)+ s \: \mathrm{deg}(\Delta))^{d(m)+s}
\end{align*}  for each $n \in \mathbb{Z}$ such that $F(n) \in P_s$, where $d(m)$ is the number of prime divisors of $m$. This proves our theorem.
    
\end{proof}

The following result is a sharper version of Theorem \ref{generalthm} when the discriminant is of special form.

\begin{theorem}\label{theoremspecialization}
    Let $E/\mathbb{Q}(T)$ be a non-isotrivial elliptic curve and let $\Delta(T)$ be the discriminant of a Weierstrass equation for $E/\mathbb{Q}(T)$ which is minimal at all places of $\mathbb{Q}[T]$. 
    \begin{enumerate}
        \item Assume $\Delta(T)= m g(T)$, where $m \in \mathbb{Z}$ and $g(T)$ is an irreducible quadratic polynomial whose coefficients are integers with highest common factor $1$. Then there exist infinitely many $n \in \mathbb{N}$ such that $c(E_n) \leq 16 (\log_2(m)+1)^{d(m)}$, where $d(m)$ is the number of prime divisors of $m$.
        \item Assume $\Delta(T)=mg(T)$, where $m \in \mathbb{Z}$ and $g(t)$ is a cubic polynomial whose coefficients are integers with highest common factor $1$ and is not divisible by the 2nd power of a linear polynomial with integral coefficients. Then there exist infinitely many $n \in \mathbb{N}$ such that $c(E_n) \leq 16 (\log_2(m)+1)^{d(m)}$, where $d(m)$ is the number of prime divisors of $m$.
    \end{enumerate}
\end{theorem}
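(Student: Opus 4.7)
My plan is to prove both parts by the same strategy: produce infinitely many $n \in \mathbb{N}$ for which $g(n)$ is squarefree, and then sharpen the Tamagawa-number estimate from the proof of Theorem~\ref{generalthm} in this stronger regime. The key observation is that once $v_p(g(n)) \leq 1$ for every prime $p$, the factors of $\prod_{p \mid \Delta(n)} v_p(\Delta(n))$ split cleanly into contributions from primes dividing $m$ (each bounded by $v_p(m)+1 \leq \log_2(m)+1$) and contributions from primes $p \mid g(n)$ not dividing $m$ (which will turn out to be trivial).

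The required infinitude of squarefree values is classical. For part~(1), Estermann's theorem (1931) asserts that an irreducible quadratic polynomial with integer coefficients and content~$1$ takes squarefree values on a set of positive density, provided it has no fixed square divisor. For part~(2), the analogous theorem of Hooley (1967) applies to cubics that are squarefree as polynomials in $\mathbb{Z}[T]$ and have no fixed square divisor; the hypothesis ``not divisible by the $2$nd power of a linear polynomial'' is precisely the polynomial-level squarefreeness condition needed. The content-$1$ assumption implies, by an elementary Lagrange-type argument, that no prime $p > \deg(g)$ can be a fixed divisor of $g(n)$, so only finitely many small primes need checking, and any problematic prime can be bypassed by restricting $n$ to an appropriate residue class modulo a bounded modulus, which preserves positive density.

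For each such $n$ I then bound $c(E_n)$ prime by prime. If $p \mid g(n)$ and $p \nmid m$, then $v_p(\Delta(n)) = 1$; since every additive Kodaira type requires $v_p(\Delta_{\min}) \geq 2$ and since the inherited Weierstrass equation is automatically minimal at such a prime (non-minimality would drop $v_p(\Delta(n))$ by $12$), the reduction of $E_n$ at $p$ is of type $\mathrm{I}_1$, and $c_p(E_n) = 1$ by Table~\ref{table}. If instead $p \mid m$, then $v_p(\Delta(n)) \leq v_p(m) + 1 \leq \log_2(m) + 1$; applying $c_p(E_n) \leq v_p(\Delta(n))$ for $p \geq 5$ and $c_p(E_n) \leq 4\, v_p(\Delta(n))$ for $p \in \{2,3\}$ as in the proof of Theorem~\ref{generalthm}, the product over $p \mid m$ is at most
\[ 16 \prod_{p \mid m}\bigl(v_p(m) + 1\bigr) \leq 16\, (\log_2(m) + 1)^{d(m)}. \]
Multiplying over all primes yields the claimed bound on $c(E_n)$.

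The main obstacle is a clean verification that Estermann's and Hooley's theorems really apply under the sole hypotheses of Theorem~\ref{theoremspecialization}, that is, that no prohibitive fixed square divisor exists. This is handled by the content-$1$ assumption together with a finite check at the primes $p \leq \deg(g)$; once that is settled, everything else is a routine adaptation of the argument in the proof of Theorem~\ref{generalthm}.
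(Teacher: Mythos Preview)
Your proposal is correct and follows essentially the same route as the paper: invoke a classical squarefree-values theorem for $g$ (the paper cites Ricci and Erd\H{o}s rather than Estermann and Hooley, but these cover the same ground), then reuse the prime-by-prime estimate from the proof of Theorem~\ref{generalthm}, noting that primes $p\mid g(n)$ with $p\nmid m$ contribute $c_p(E_n)=1$ and primes $p\mid m$ contribute at most $v_p(m)+1\leq \log_2(m)+1$, with the factor $16$ absorbing the possible additive reduction at $2$ and $3$. Your extra care about the fixed-square-divisor hypothesis and about minimality at primes with $v_p(\Delta(n))=1$ is more explicit than the paper's treatment, but the underlying argument is the same.
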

\begin{proof}
    Let $E/\mathbb{Q}(T)$ be a non-isotrivial elliptic curve and let $\Delta(T)$ be the discriminant of a Weierstrass equation for $E/\mathbb{Q}(T)$ which is minimal at all places of $\mathbb{Q}[T]$.   
    
    { \it Proof of $(i)$:} Assume that $\Delta(T)=mg(T)$, where $g(T)$ is as in $(i)$. According to \cite{ricci1933} (see also \cite[Pages 416 and 417]{erdos1953}) we have that there exist infinitely many positive integers $n$ for which for $g(n)$ is square-free. Let $n \in \mathbb{N}$ such that $g(n)$ is square-free and consider the specialization $E_n/\mathbb{Q}$ of $E/\mathbb{Q}(T)$ at $T=n$. 

    We have $$c(E_n)=\displaystyle\prod_{p \mid mg(n)} c_p(E_n) = \displaystyle\prod_{ \substack{ p\mid m \\ p \: \nmid g(n)}}c_p(E_n) \displaystyle\prod_{ \substack{ p\: \nmid m \\ p \mid g(n)}}c_p(E_n) \displaystyle\prod_{ \substack{ p\mid m \\ p \mid g(n)}}c_p(E_n). $$

    Because $g(n)$ is square free we have that $c_p(E_n)=v_p(\Delta(n))=1$ for every prime $p$ with $p \mid g(n)$ and $p \nmid m$. Moreover, for every prime $p \geq 5$ that divides $\Delta(n)$ we have that $c_p(E_n) \leq v_p(\Delta(n))$, see \cite[Page 365]{silverman2}. Assume now that $p=2$ or $3$, and that $p$ divides $\Delta(n)$. If $E_n/\mathbb{Q}$ has multiplicative reduction modulo $p$, then $c_p(E_n) \leq v_p(\Delta(n))$, by \cite[Page 366]{silverman2}. On the other hand, if $E_n/\mathbb{Q}$ has additive reduction modulo $p$, then $c_p(E_n) \leq 4 \leq 4 v_p(\Delta(n))$. 

    Therefore,

$$\displaystyle\prod_{ \substack{ p\mid m \\ p \: \nmid g(n)}}c_p(E_n) \displaystyle\prod_{ \substack{ p\: \nmid m \\ p \mid g(n)}}c_p(E_n) \displaystyle\prod_{ \substack{ p\mid m \\ p \mid g(n)}}c_p(E_n) \leq 16 \displaystyle\prod_{ \substack{ p\mid m \\ p \: \nmid g(n)}} v_p(m) \displaystyle\prod_{ \substack{ p\mid m \\ p \mid g(n)}}v_p(mg(n)), $$
where the factor $16$ comes from the primes $p=2$ and $p=3$, which may or may not be primes of additive reduction. Moreover, since $g(n)$ is square-free we see that if $p \min m$ and $p \mid g(n)$, then $v_p(mg(n))=v_p(m)+v_p(g(n))=v_p(m)+1$.
Thus, 
\begin{align*}16 \displaystyle\prod_{ \substack{ p\mid m \\ p \: \nmid g(n)}} v_p(m) \displaystyle\prod_{ \substack{ p\mid m \\ p \mid g(n)}}v_p(mg(n)) \leq 16\displaystyle\prod_{ \substack{ p\mid m \\ p \: \nmid g(n)}} (v_p(m)+1) \displaystyle\prod_{ \substack{ p\mid m \\ p \mid g(n)}}(v_p(m)+1)=16\displaystyle\prod_{  p\mid m } (v_p(m)+1)\\ \leq 16 \displaystyle\prod_{  p\mid m }(\log_2(m)+1)=16(\log_2(m)+1)^{d(m)}, \end{align*}
where $d(m)$ is the number of prime divisors of $m$.

    { \it Proof of $(ii)$:} Assume that $\Delta(T)=mg(T)$, where $g(T)$ is as in $(ii)$. According to \cite{erdos1953} there exist infinitely many positive integers $n$ for which $g(n)$ is square-free. Let $n \in \mathbb{N}$ such that $g(n)$ is square-free and consider the specialization $E_n/\mathbb{Q}$ of $E/\mathbb{Q}(T)$ at $T=n$. Proceeding exactly as in part $(i)$ we find that $c(E_n) \leq 16 (\log_2(m)+1)^{d(m)}$, where $d(m)$ is the number of prime divisors of $m$. This proves our theorem.
\end{proof}

Finally, relying on the abc-conjecture we can prove the following result.
    
\begin{theorem}\label{theoremabc}
    Let $E/\mathbb{Q}(T)$ be a non-isotrivial elliptic curve and let $\Delta(T)$ be the discriminant of a Weierstrass equation for $E/\mathbb{Q}(T)$ which is minimal at all places of $\mathbb{Q}[T]$.  Assume that $\Delta(T)$ has no repeated roots and that the largest integer that divides all elements of the set $\{ \Delta(s) \: | \: s \in \mathbb{Z}\}$ is square-free. If the abc-conjecture holds, then there exist infinitely many $n \in \mathbb{N}$ such that $c(E_n) = 1$.
\end{theorem}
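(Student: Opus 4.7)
The plan is to reduce the statement to a theorem of Granville on square-free values of polynomials conditional on the abc-conjecture, and then translate the square-freeness of $\Delta(n)$ into the triviality of all local Tamagawa numbers via Tate's algorithm.

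First, I would invoke Granville's result from \emph{ABC allows us to count squarefrees} (Granville, 1998): if $f(x) \in \mathbb{Z}[x]$ has no repeated roots and the fixed divisor $d_f := \gcd \{ f(n) : n \in \mathbb{Z}\}$ is square-free, then, assuming the abc-conjecture, the set of $n \in \mathbb{N}$ for which $f(n)$ is square-free has positive density. The two hypotheses placed on $\Delta(T)$ in the theorem are exactly that $\Delta(T)$ has no repeated roots and that $d_\Delta$ is square-free. Hence there are infinitely many $n \in \mathbb{N}$ with $\Delta(n)$ square-free and nonzero.

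Fix such an $n$. The specialization $E_n/\mathbb{Q}$ is an elliptic curve, and the Weierstrass model inherited from $E/\mathbb{Q}(T)$ has discriminant equal to $\Delta(n)$. Since the minimal discriminant satisfies $\Delta_{E_n} = u^{12} \Delta_{E_n}^{\min}$ for some $u \in \mathbb{Z}$, we deduce that $v_p(\Delta_{E_n}^{\min}) \leq v_p(\Delta(n)) \leq 1$ for every prime $p$. Crucially, $v_p(\Delta_{E_n}^{\min}) \in \{0,1\}$ forces the reduction at $p$ to be either good or multiplicative of Kodaira type $\textup{I}_1$: by Tate's algorithm every additive reduction type, even at $p = 2, 3$, satisfies $v_p(\Delta^{\min}) \geq 2$. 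Consulting Table 1, in both remaining cases one has $c_p(E_n) = 1$, so that $c(E_n) = \prod_p c_p(E_n) = 1$.

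The principal burden of the proof is thus carried entirely by Granville's conditional theorem on square-free values of polynomials; the rest is a short local verification via Tate's algorithm. The only subtlety I would want to double-check is that the polynomial $\Delta(T) \in \mathbb{Z}[T]$ denoted ``minimal discriminant'' in the statement really arises as the discriminant of an integral Weierstrass model of $E/\mathbb{Q}(T)$ whose specializations at integers $n$ are integral Weierstrass models of $E_n/\mathbb{Q}$, so that the inequality $v_p(\Delta_{E_n}^{\min}) \leq v_p(\Delta(n))$ is legitimately available; this is standard for the minimal Weierstrass model of an elliptic surface over $\mathbb{P}^1_\mathbb{Q}$.
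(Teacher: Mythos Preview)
Your proposal is correct and follows essentially the same route as the paper: invoke Granville's abc-conditional theorem on square-free values of polynomials to obtain infinitely many $n$ with $\Delta(n)$ square-free, and then observe via Tate's algorithm (the paper cites \cite[Page 365]{silverman2} and \cite[Tableau II \& IV]{pap}) that $v_p(\Delta(n))\le 1$ forces good or $\textup{I}_1$ reduction, hence $c_p(E_n)=1$. Your extra care with the minimal-model inequality $v_p(\Delta_{E_n}^{\min})\le v_p(\Delta(n))$ and the integrality caveat is a welcome addition that the paper leaves implicit.
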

\begin{proof}[Proof of Theorem \ref{theoremabc}]
    According to \cite[Theorem 1]{granvilleabc} there exist infinitely many $n \in \mathbb{Z}$ such that $\Delta(n)$ is squarefree. For each such $n$ consider the specialization $E_n/\mathbb{Q}$ of $E/\mathbb{Q}(T)$ at $T=n$. Thus we have that $v_p(\Delta(n)) \leq 1$ for all primes $p$. On the other hand, it follows from \cite[Page 365]{silverman2} and \cite[Tableau II \& Tableau IV]{pap} that if $v_p(\Delta(n)) \leq 1$, then $c_p(E_n) \leq v_p(E_n)$. Therefore, we have that $c(E_n)=\displaystyle\prod_{p \mid \Delta(n)} c_p(E_n) \leq \displaystyle\prod_{p \mid \Delta(n)}  v_p(E_n)=1.$ This proves the theorem.
\end{proof}

\bibliographystyle{plain}
\bibliography{bibliography.bib}

\begin{thebibliography}{10}

\bibitem{barrioscullinantamagawanumbersellipticcurves}
A.~Barrios and J.~Cullinan.
\newblock Tamagawa numbers of elliptic curves with an $\ell$-isogeny, 2024.

\bibitem{barriosroy}
A.~J. Barrios and M.~Roy.
\newblock Local data of rational elliptic curves with nontrivial torsion.
\newblock {\em Pac. J. Math.}, 318(1):1--42, 2022.

\bibitem{brsttw}
A.~J. Barrios, M.~Roy, N.~Sahajpal, D.~Tallana, B.~Tobin, and H.~Wiersema.
\newblock Local data of elliptic curves under quadratic twist, 2025.

\bibitem{Casselsgenus1BSD}
J.~W.~S. Cassels.
\newblock Arithmetic on curves of genus 1. {VIII}. {O}n conjectures of {B}irch and {S}winnerton-{D}yer.
\newblock {\em J. Reine Angew. Math.}, 217:180--199, 1965.

\bibitem{com}
S.~Comalada.
\newblock Twists and reduction of an elliptic curve.
\newblock {\em J. Number Theory}, 49(1):45--62, 1994.

\bibitem{ellipticcurvehandbook}
I.~Connell.
\newblock Elliptic curve handbook, 1999.

\bibitem{dd}
T.~Dokchitser and V.~Dokchitser.
\newblock Local invariants of isogenous elliptic curves.
\newblock {\em Trans. Amer. Math. Soc.}, 367(6):4339--4358, 2015.

\bibitem{notesonparity}
Tim Dokchitser.
\newblock {\em Notes on the Parity Conjecture}, page 201–249.
\newblock Springer Basel, 2013.

\bibitem{dummigan}
N.~Dummigan.
\newblock Rational points of order 7.
\newblock {\em Bull. Lond. Math. Soc.}, 40(6):1091--1093, 2008.

\bibitem{erdos1953}
P.~Erd{\H{o}}s.
\newblock Arithmetical properties of polynomials.
\newblock {\em J. Lond. Math. Soc.}, 28:416--425, 1953.

\bibitem{granvilleabc}
A.~Granville.
\newblock {{\(abc\)}} allows us to count squarefrees.
\newblock {\em Int. Math. Res. Not.}, 1998(19):991--1009, 1998.

\bibitem{sievemethodsbook}
H.~Halberstam and H.-E. Richert.
\newblock {\em Sieve methods}, volume~4 of {\em Lond. Math. Soc. Monogr.}
\newblock Academic Press, London, 1974.

\bibitem{hus}
D.~Husem\"{o}ller.
\newblock {\em Elliptic curves}, volume 111 of {\em Graduate Texts in Mathematics}.
\newblock Springer-Verlag, New York, second edition, 2004.
\newblock With appendices by Otto Forster, Ruth Lawrence and Stefan Theisen.

\bibitem{katzmazur}
N.~M. Katz and B.~Mazur.
\newblock {\em Arithmetic moduli of elliptic curves}, volume 108 of {\em Annals of Mathematics Studies}.
\newblock Princeton University Press, Princeton, NJ, 1985.

\bibitem{kenku}
M.~A. Kenku.
\newblock On the number of {Q}-isomorphism classes of elliptic curves in each {Q}- isogeny class.
\newblock {\em J. Number Theory}, 15:199--202, 1982.

\bibitem{Krummthesis}
D.~Krumm.
\newblock {\em Quadratic Points on Modular Curves}.
\newblock PhD thesis, University of Georgia, 2013.

\bibitem{lmfdb}
The {LMFDB Collaboration}.
\newblock The {L}-functions and modular forms database.
\newblock \url{https://www.lmfdb.org}, 2025.
\newblock [Online; accessed 13 June 2025].

\bibitem{lor}
D.~J. Lorenzini.
\newblock Torsion and {T}amagawa numbers.
\newblock {\em Ann. Inst. Fourier (Grenoble)}, 61(5):1995--2037 (2012), 2011.

\bibitem{lorenzini2013}
D.~J. Lorenzini.
\newblock Wild quotient singularities of surfaces.
\newblock {\em Math. Z.}, 275(1-2):211--232, 2013.

\bibitem{lozanorobledo}
{\'A}.~Lozano-Robledo.
\newblock On the field of definition of {{\(p\)}}-torsion points on elliptic curves over the rationals.
\newblock {\em Math. Ann.}, 357(1):279--305, 2013.

\bibitem{mazur}
B.~Mazur.
\newblock Rational isogenies of prime degree. ({With} an appendix by {D}. {Goldfeld}).
\newblock {\em Invent. Math.}, 44:129--162, 1978.

\bibitem{mentzelosarch}
M.~Melistas.
\newblock Tamagawa numbers of elliptic curves with torsion points.
\newblock {\em Arch. Math. (Basel)}, 119(2):155--165, 2022.

\bibitem{mentzelosisogenies}
M.~Melistas.
\newblock Reduction and isogenies of elliptic curves.
\newblock {\em Acta Arith.}, 215(2):179--192, 2024.

\bibitem{najmantamawanumber}
F.~Najman.
\newblock Tamagawa numbers of elliptic curves with {$C_{13}$} torsion over quadratic fields.
\newblock {\em Proc. Amer. Math. Soc.}, 145(9):3747--3753, 2017.

\bibitem{pap}
I.~Papadopoulos.
\newblock Sur la classification de {N}\'{e}ron des courbes elliptiques en caract\'{e}ristique r\'{e}siduelle {$2$} et {$3$}.
\newblock {\em J. Number Theory}, 44(2):119--152, 1993.

\bibitem{ricci1933}
G.~Ricci.
\newblock Ricerche aritmetiche sui polinomi.
\newblock {\em Rend. Circ. Mat. Palermo}, 57:433--475, 1933.

\bibitem{shimurabook}
G.~Shimura.
\newblock {\em Introduction to the arithmetic theory of automorphic functions}, volume~11 of {\em Publ. Math. Soc. Japan}.
\newblock Iwanami Shoten, Publishers, Tokyo; Princeton University Press, Princeton, NJ, 1971.

\bibitem{silverman2}
J.~H. Silverman.
\newblock {\em Advanced topics in the arithmetic of elliptic curves}, volume 151 of {\em Graduate Texts in Mathematics}.
\newblock Springer-Verlag, New York, 1994.

\bibitem{aec}
J.~H. Silverman.
\newblock {\em The arithmetic of elliptic curves}, volume 106 of {\em Graduate Texts in Mathematics}.
\newblock Springer, Dordrecht, second edition, 2009.

\bibitem{tatealgorithm}
J.~Tate.
\newblock Algorithm for determining the type of a singular fiber in an elliptic pencil.
\newblock In {\em Modular functions of one variable, {IV} ({P}roc. {I}nternat. {S}ummer {S}chool, {U}niv. {A}ntwerp, {A}ntwerp, 1972)}, pages 33--52. Lecture Notes in Math., Vol. 476, 1975.

\bibitem{trbovic}
A.~Trbovi{\'c}.
\newblock Tamagawa numbers of elliptic curves with prescribed torsion subgroup or isogeny.
\newblock {\em J. Number Theory}, 234:74--94, 2022.

\bibitem{Tsukazakithesis}
K.~Tsukazaki.
\newblock {\em Explicit isogenies of elliptic curves}.
\newblock PhD thesis, University of Warwick, 2013.

\bibitem{haiyang}
H.~Wang.
\newblock Elliptic curves with potentially good supersingular reduction and coefficients of the classical modular polynomials.
\newblock {\em J. Number Theory}, 271:183--307, 2025.

\end{thebibliography}

\end{document}